\documentclass[11pt, oneside, letterpaper, reqno]{amsart}

\usepackage{amsthm, amssymb, amscd, amsmath}
\usepackage{graphicx, color}
\usepackage{mathrsfs}

\usepackage{enumerate}
\usepackage{stmaryrd}
\usepackage[foot]{amsaddr}

\usepackage[hidelinks, colorlinks=true, linkcolor = blue, linktoc = page,
citecolor = blue]{hyperref}

\usepackage[T1]{fontenc}
\usepackage{lmodern}


\usepackage[text={5.4in, 8.0in},centering]{geometry}
\usepackage{microtype}


\newtheorem{theorem}{Theorem}[section]  
\newtheorem*{theorem*}{Theorem}
\newtheorem{proposition}[theorem]{Proposition}

\newtheorem{lemma}[theorem]{Lemma}

\theoremstyle{definition}
\newtheorem{remark}{Remark}
\newtheorem{definition}{Definition}


\newcommand{\cL}{\mathcal{L}}
\newcommand{\cLh}{\mathcal{L}_{h}}
\newcommand{\st}{:\,}
\newcommand{\id}{\mathrm{id}}
\newcommand{\T}{\mathbb{T}}
\newcommand{\cD}{\mathcal{D}}
\newcommand{\cB}{\mathcal{B}}
\newcommand{\cP}{\mathcal{P}}
\newcommand{\R}{\mathbb{R}}
\newcommand{\cA}{\mathcal{A}}
\newcommand{\cG}{\mathcal{G}}
\newcommand{\cS}{\mathcal{S}}
\newcommand{\cO}{\mathcal{O}}

\newcommand{\cJ}{\mathcal{J}}
\newcommand{\C}{\mathbb{C}}
\DeclareMathOperator{\dist}{\mathrm{dist}}

\newcommand{\Z}{\mathbb{Z}}
\newcommand{\N}{\mathbb{N}}
\newcommand{\bS}{\mathbb{S}}
\newcommand{\bfc}{\mathbf{c}}

\newcommand{\bmat}[1]{\begin{bmatrix}#1\end{bmatrix}}

\newcommand{\floor}[1]{\lfloor #1 \rfloor}
\renewcommand{\Re}{\mathrm{Re}}


\title[Generic a priori unstable global diffusion]{Generic global diffusion for analytic uncoupled a priori unstable systems}

\author{Amadeu Delshams$^1$}
\address{$^1$Laboratory of Geometry and Dynamic Systems and IMTech, Universitat Politècnica de Catalunya (UPC) and 
Centre for Mathematical Research (CRM), Barcelona }

\author{Ke Zhang$^2$}
\address{$^2$University of Toronto}

\begin{document}

\begin{abstract}
We show that given a general uncoupled a priori unstable Hamiltonian
\[
\frac12 p^2 + V(q) + G(I) + \epsilon h(p, q, I, \varphi, t),
\]
where $h$ is a generic Ma\~n\'e analytic function and $\epsilon$ is small enough, there is an orbit for which the momentum $I$ changes by any arbitrarily prescribed value. We call this phenomenon as global diffusion since the size of the change in $I$ is independent of both $\epsilon$ and $h$. The fact that the pendulum and rotor variables are uncoupled is used essentially in our proof. The proof is based on simple and constructive geometrical methods, carefully studying the reduced Poincar\'e functions of the problem which generate the corresponding scattering maps.
\end{abstract}

\maketitle
\section{Introduction}

For a nearly integrable Hamiltonian
\begin{equation}  \label{eq:nearly-integrable}
  H_\epsilon(I, \varphi) = H_0(I) + \epsilon H_1(I, \varphi),
\end{equation}
a large part of the phase space is occupied by invariant KAM tori, making the system remarkably stable. Arnold diffusion is the study of topological instability in the complement of these KAM tori. The instability is created by resonances of the unperturbed frequency $\partial_I H_0(I)$. Indeed, in the first example of Arnold diffusion, Arnold~\cite{Arn64} considers the system
\begin{equation}  \label{eq:arnold}
  H_\epsilon = \frac12(I_1^2 + I_2^2) + \epsilon (\cos \varphi_1 - 1) + \epsilon \mu  (\cos \varphi_1 - 1) f(\varphi_2, t),
\end{equation}
and proves that for suitable $f$ and  $0<\mu \ll \epsilon$, there exists an orbit whose $I_2$ component change for an arbitrary large distance along the resonance $\{I_1 = 0\}$. Since $\mu$ is taken much smaller than $\epsilon$, after a rescaling we may assume $\epsilon = 1$ in \eqref{eq:arnold}. This type of systems are called \emph{a priori unstable} because the hyperbolic structure that leads to instability exists even at $\mu = 0$. The original problem \eqref{eq:nearly-integrable} is called \emph{a priori stable}, since the unperturbed system is completely integrable.

We have seen much progress in proving  Arnold diffusion holds for \emph{generic} $C^r$ \emph{a priori stable} systems, we refer to \cite{Mat04, Mat08, Che17, Che19, Mar16, Mar24, KZ20} for a non-exhaustive list of references. Those results are built on decades of progress on \emph{a priori unstable} systems, see \cite{Ber08, CY04, CY09, DLS06, DSd08, Tre02, Tre04, KZB16, GM22} and reference therein.

For real analytic systems, however, generic Arnold diffusion for \emph{a priori stable} systems remain wide open. A large body of literature is dedicated to analytic examples of diffusion (for example \cite{Zha11, KSL14}) and concrete systems like the N-body problems (for example \cite{DKRS19,CFG22}). For generic analytic perturbation of \emph{a priori chaotic} systems, we mention the works of Gelfreich-Turaev~\cite{TG17} and Clarke-Turaev~\cite{TC22}. For a priori unstable systems, Delshams-Schaefer~\cite{DS17, DS18} studied the diffusion for a finite parameter family of perturbations.

The most relevant work to this paper is \cite{LC22} by Chen-de la Llave, who proved that for an \emph{a priori unstable} system of the type
\begin{equation}  \label{eq:chen-llave}
\begin{aligned}
		H_\epsilon & = H_0(p, q, I) + \epsilon h(p, q, I, \varphi, t), \quad
H_0(p, q, I) = G(I) + \sum_{i = 1}^d \pm \left( \frac12 p_i^2 + V_i(q_i)\right), \\
							 & \quad p \in \R^d, \, q \in \T^d, \, I \in \R, \varphi, t \in \T,
\end{aligned}
\end{equation}
for a Ma\~n\'e generic analytic perturbation $H_1$ (precise definition to be given later), there exists a diffusion orbit $(p, q, I, \varphi)(t)$ and $T > 0$ such that
\[
  |I(T) - I(0)| > \rho(H_1).
\]
The point is that the magnitude of topological instability does not vanish as $\epsilon \to 0$.

The main result of our paper is that in a system of type \eqref{eq:chen-llave} one can achieve \emph{large-scale} diffusion for a  Ma\~n\'e generic analytic perturbation independent of $H_1$. Indeed, our main result claims that given \emph{any} $I^- < I^+$, for a  Ma\~n\'e generic analytic potential $H_1$, \emph{for all} $\epsilon < \epsilon_0(H_1)$, the system $H_0 + \epsilon H_1$ admits an orbit $(p, q, I, \varphi)(t)$ and $T > 0$ such that
\[
  I(0) < I^- < I^+ < I(T).
\]
We call this large drift of the variable $I$ \emph{global diffusion}, compared to the local-scale diffusion of \cite{LC22}.
In particular, it follows from our result that Arnold's Theorem in \cite{Arn64} remains true for a generic analytic perturbation of the type $ h(\varphi_1, \varphi_2, t)$ instead of the special perturbation $(\cos \varphi_1 - 1) f(\varphi_2, t)$ chosen by Arnold.

Let us point out that while the unperturbed Hamiltonian of the type \eqref{eq:chen-llave} are often considered synonymous to \emph{a priori unstable} system, it is quite special because the variables $I$ and $p$ are uncoupled, as is the case in \emph{all} the mentioned a priori unstable systems. This has a significant effect in the \emph{scattering map} approach to Arnold diffusion, because the uncoupling of the variables $I$, $p$ implies that the scattering map has \emph{zero phase shift}. Indeed, our proof does \emph{not} apply directly if the unperturbed Hamiltonian is
\begin{equation}  \label{eq:phase-shift}
		H_0(p, q, I) = G(I) + \sum_{i = 1}^d \pm \left( \frac12 p_i^2 + V_i(q_i)\right) + I \sum_{i = 1}^d \lambda_i p_i
\end{equation}
for a non-zero vector $\lambda = (\lambda_1, \dots, \lambda_d)$. Generic global diffusion for a priori unstable analytic systems in the presence of phase shift remains open. To avoid any possible confusion, we have included in the title of this paper the adjective `uncoupled' in the a priori unstable systems that we are dealing with.

Our proof uses the scattering map approach (see \cite{DLS06, DSd08, GL05, dGM20}). The diffusion orbit travels close to a normally hyperbolic invariant manifold (NHIM), and the dynamics can be
described by the scattering map (homoclinic excursion) and inner map (dynamics restricted to the NHIM). The scattering map can be computed to the first order by means of the Melnikov potential $\cL_h(I,\varphi,s)$ introduced in~\eqref{eq:melnikov}. Our approach is twofold: 
\begin{itemize}
		\item Generically, for every fixed $I$ the Melnikov potential $\cL_h(I, \cdot, \cdot)$ admits a non-degenerate local minimum. These local minima are obtained from local extension of the global minima. 
		\item Near these local minima of the Melnikov potential, we show that there exists a pseudo-orbit of the scattering map and the restricted dynamics to the invariant manifold whose $I$ component is increasing. We call this the ``Ascending Ladder''. We then show that there exists a trajectory of the Hamiltonian system which shadows the ascending ladder. 
\end{itemize}
In the approach followed, no quantitative estimates of diffusion time are made nor is there an attempt to optimize them, since our goal is to propose a very general, simple and constructive diffusion mechanism. In the pursuit of simplicity for the reader, this article is almost self-contained, except for the theory of scattering maps and geometric mechanism of diffusion (Proposition \ref{prop:homoclinic}, equation \eqref{eq:SM1}, and Lemma \ref{lem:shadowing-primitive}), for which specific references are given.


\subsection*{Acknowledgment}
AD supported by Spanish grant PID2021-123968NB-I00 (MICIU/AEI/10.13039/501100011033/FEDER/UE). KZ is supported by NSERC fund RGPIN-2019-07057.

\section{Formulation of the main result}
Consider the Hamiltonian
\begin{equation}  \label{eq:unstable}
  H_\epsilon(p, q, I, \varphi, t) = \frac12 p^2 + V(q) + G(I) + \epsilon h(p, q, I, \varphi, t),
\end{equation}
where
\[
		(p, q, I, \varphi, t) \in M := \R^d \times \T^d \times \R \times \T \times \T
\]
and $p^2$ stands for $p \cdot p$.

Assume that
\begin{enumerate}
		\item $V$ has a non-degenerate global maximum; without loss of generality, suppose it is at $q = 0$.
		\item For the Hamiltonian system $\frac12 p^2 + V(q)$, the stable and unstable manifolds of $(0, 0)$ intersect along a homoclinic trajectory
        \begin{equation}
\label{eq:homoclinic}
						\left\{\left(p_0(t), q_0(t)\right) : \ t \in \R\right\}.
				\end{equation}
		\item $G(I)$ is strictly convex in $I$.
\end{enumerate}
Let us denote $\omega(I) = \nabla G(I)$.
\begin{remark}
While we require $G(I)$ to be strictly convex, the proof is not variational. The only essential property used is the twist property $\omega'(I) \ne 0$, hence the proof also applies to concave $G(I)$, or on each interval of $I$ for which $\omega'(I)$ does not vanish.
\end{remark}
\begin{definition}
\label{def:spaces}
Let $U \subset M$ be an open set. We define its $\sigma$-complex neighborhood to be
\[
		\cB_\sigma(U) =
		\{
				\tilde z = (p, q, I, \varphi, t) \in \C^d \times (\C/\Z)^d \times \C \times (\C/\Z) \times (\C/\Z) \st
				\dist (\tilde z, U) < \sigma
		\}.
\]
Let $\cA_\sigma(U)$ denote the space of all real analytic functions on $U$ that can be extended to a bounded analytic function on $\cB_\sigma(U)$. Equipped with the supremum norm, this is a Banach space. Let
\[
  Q = \T^d \times \T \times \T
\]
denote the configuration space, i.e., the space of the variables $(q, \varphi, t)$. We will also consider the complex strip
\[
		\cD_\sigma = \{(q, \varphi, t) \in (\C/\Z)^d \times (\C/\Z) \times (\C/\Z) \st \|\Re(q, \varphi, t) \| < \sigma\}
\]
and the space  $\cP_\sigma$  of real analytic functions on $Q$ extensible to a bounded function on $\cD_\sigma$. Identifying a function $f \in \cP_\sigma$ with its trivial extension to $\cB_\sigma(M)$, $\cP_\sigma \subset \cB_\sigma(U)$ for any $U$.
\end{definition}
\begin{theorem}
Let $U \subset M$ be open.
Let $I^- < I^+$ be such that
\[
		\{(p_0(t), q_0(t)) \st t \in \R\} \times [I^-, I^+] \times \T \times \T \subset U.
\]

Then for any $h \in \cA_\sigma(U)$, there exists an open and dense set $\cG_h$ of $\cP_\sigma$ such that, for any $g\in\cG_h$,
the following holds for the Hamiltonian system
\[
    H_\epsilon = \frac12 p^2 + V(q) + G(I) + \epsilon \left( h(p,q, I,\varphi, t) + g(q, \varphi, t) \right).
\]
There exists $\epsilon_0(h+g) > 0$ such that for all $\epsilon \in (0, \epsilon_0)$, the system $H_\epsilon$ admits a trajectory $(p, q, I, \varphi)(t)$, $t \in [0, T]$, satisfying
\[
  I(0) < I^-, \quad I(T) > I^+.
\]
\end{theorem}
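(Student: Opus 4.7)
The plan follows the two-bullet outline given in the introduction. First, for $\epsilon = 0$ the set $\Lambda_0 = \{(p,q) = (0,0)\} \times \R \times \T \times \T$ is a normally hyperbolic invariant manifold (NHIM) whose stable and unstable manifolds intersect transversally along the homoclinic orbit $(p_0(\tau), q_0(\tau))$. Standard NHIM perturbation theory produces a nearby $\Lambda_\epsilon$ and a homoclinic channel for small $\epsilon$, and the induced scattering map $S_\epsilon$ on $\Lambda_\epsilon$ is generated, to first order in $\epsilon$, by the Melnikov potential
$$\cL_h(I,\varphi,s) = \int_{-\infty}^\infty \bigl[ h(p_0(\tau), q_0(\tau), I, \varphi + \omega(I)(\tau - s), \tau) - h(0, 0, I, \varphi + \omega(I)(\tau - s), \tau)\bigr]\, d\tau.$$
At each non-degenerate critical point $s = s^*(I,\varphi)$ of $\cL_h$ in $s$, the scattering map satisfies $S_\epsilon(I,\varphi) = (I + \epsilon\, \partial_\varphi \cL_h^*, \varphi - \epsilon\, \partial_I \cL_h^*) + \cO(\epsilon^2)$, with $\cL_h^*(I,\varphi) = \cL_h(I,\varphi, s^*(I,\varphi))$; as observed in the introduction, there is no extra phase shift thanks to the uncoupling of $p$ and $I$.

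Next, because $g \in \cP_\sigma$ depends only on $(q,\varphi,t)$, the map $g \mapsto \cL_g$ is a linear functional which, after expanding $g$ in Fourier series on $\T^d \times \T \times \T$, has an infinite-dimensional image in the space of functions of $(I, \varphi, s)$. I would let $\cG_h \subset \cP_\sigma$ be the set of $g$ such that, for every $I \in [I^-, I^+]$, the global minimum of $\cL_{h+g}(I,\cdot,\cdot)$ is attained at non-degenerate points which, by the implicit function theorem, extend locally to a smooth family $(\varphi^*(I), s^*(I))$, with at most finitely many ``transition'' $I$-values at which one locally non-degenerate branch is replaced by another. Openness of $\cG_h$ is immediate; density follows from a jet-transversality argument in which perturbing finitely many Fourier coefficients of $g$ simultaneously breaks all ties between competing minima and destroys all codimension-one degeneracies of the Hessian of $\cL_{h+g}(I,\cdot,\cdot)$.

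With such a $g$ in hand, I would construct the ascending ladder as follows. At each smooth branch of minima one has $\partial_\varphi \cL_{h+g}^*(I,\varphi^*(I)) = 0$ and $\partial_\varphi^2 \cL_{h+g}^* > 0$ there; consequently, shifting $\varphi$ slightly to one side of $\varphi^*(I)$ makes $\partial_\varphi \cL_{h+g}^*(I,\varphi) > 0$, so a single application of $S_\epsilon$ produces $\Delta I = \epsilon\, \partial_\varphi \cL_{h+g}^*(I,\varphi) + \cO(\epsilon^2) > 0$ of definite order $\epsilon$. Strict convexity of $G$ gives the twist $\omega'(I) \neq 0$, so the inner flow on $\Lambda_\epsilon$, being $\cO(\epsilon)$-close to $\varphi \mapsto \varphi + \omega(I)\Delta t$, can reposition $\varphi$ to any prescribed value in $\T$ while essentially preserving $I$. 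Alternating these two moves yields a pseudo-orbit of $\cO(1/\epsilon)$ rungs along which $I$ is strictly increasing and which traverses the entire interval $[I^-, I^+]$; at finitely many transition $I$-values one inserts a bridging inner iterate to hop between adjacent local branches of minima. The standard shadowing lemma for pseudo-orbits of $\{S_\epsilon, \text{inner flow}\}$ (referenced in the introduction) then yields a genuine trajectory of $H_\epsilon$ shadowing this pseudo-orbit, which proves the theorem.

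The main obstacle is the genericity step: one is fighting a codimension-one condition (a degenerate minimum or a tie between minima) across an entire one-parameter family $I \in [I^-, I^+]$, rather than at a generic single $I$. Making this work requires a parametric jet-transversality theorem in the infinite-dimensional Mañé class $\cP_\sigma$, exploiting the fact that the linear map $g \mapsto \cL_g$ is rich enough, at every $(I,\varphi,s)$, to move the full $2$-jet of $\cL_{h+g}$ in the $(\varphi,s)$-variables. Once this parametric transversality is secured, the ascending-ladder construction and the shadowing step follow from the now-standard scattering-map machinery.
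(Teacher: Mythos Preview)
Your outline matches the paper's strategy (genericity of the Melnikov potential, then ascending ladder plus shadowing), and the dynamical half is essentially right. There is, however, a real gap in the genericity step: controlling only the $2$-jet of $\cL_{h+g}$ in $(\varphi,s)$ is not enough. Over the one-parameter family $I\in[I^-,I^+]$, the conditions $\partial_{(\varphi,s)}\cL=0$ and $\det\partial^2_{(\varphi,s)}\cL=0$ cut out a codimension-$3$ set in the $3$-dimensional space of $(I,\varphi,s)$, hence generically isolated points; nothing in the $2$-jet alone prevents such a degenerate critical point from being the global minimum at that isolated value of $I$. The paper closes this by controlling the full $3$-jet: it shows that $g\mapsto\bigl(\partial_{(\varphi,s)},\partial^2_{(\varphi,s)},\partial^3_{(\varphi,s)}\bigr)\cL_g(x)$ is onto $\R^{2+3+4}$ at every $x$, and then runs the Sard argument on the map
\[
(I,\varphi,s,v)\ \longmapsto\ \bigl(\partial_{(\varphi,s)}\cL,\ \partial^2_{(\varphi,s)}\cL\, v,\ \partial^3_{(\varphi,s)}\cL[v,v,v]\bigr)\in\R^5,\qquad v\in\bS^1,
\]
the last factor carrying the putative null direction of the Hessian. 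The resulting generic condition is that at every degenerate critical point the cubic form is nonzero along the kernel, so the point cannot be a local extremum. Your ``break ties and destroy Hessian degeneracies'' heuristic is the right instinct, but the jet order must be $3$, not $2$.

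A smaller remark on the ladder: your scheme alternates a single scatter (gaining $\Delta I\sim\epsilon$) with an inner rotation to reset $\varphi$. The paper instead iterates the scattering map many times along a segment of a level curve of the reduced Poincar\'e function $\cL^*(I,\theta)$, using that $S_\epsilon$ is the time-$(-\epsilon)$ map of the Hamiltonian $\cL^*$ up to $\cO(\epsilon^2)$, and invokes the inner twist only to jump between consecutive segments. Both mechanisms are valid; the paper's version absorbs the ``reposition'' step into the Hamiltonian flow of $\cL^*$ itself, which makes the $\cO(\epsilon^2)$ error control over $\cO(1/\epsilon)$ iterates cleaner.
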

\begin{remark}
		Our theorem says that for the system $\frac12p^2 + V(q) + G(I) + \epsilon h$ a diffusion trajectory whose $I$ variable varies over the range $[I^-, I^+]$ always exists after a (possibly arbitrarily small) generic potential perturbation $\epsilon g$. This is commonly referred to as \emph{generic in the sense of Ma\~n\'e} (see \cite{Man92}), which is stronger than genericity in the space $\cA_\sigma$.
\end{remark}
\section{Geometric description of diffusion for a generic perturbation}

\subsection{Scattering map and the geometric construction of diffusion}

For $\varepsilon = 0$, Hamiltonian \eqref{eq:unstable} becomes
\[
 \frac12 p^2 + V(q) + G(I),
\]
with associated equations
\begin{equation*}
\dot{p} = V'(q), \quad\dot{q} = p, \qquad
\dot{I}= 0,\quad\dot{\varphi} = \omega(I),\quad
\dot{s}=1,
\end{equation*}
so that $I$ is a constant of motion and the flow based on the homoclinic trajectory~\eqref{eq:homoclinic} has the form
\[
\Phi_0^{t}(I ,\varphi) =(p_0(t),q_0(t),I ,\varphi + t\omega(I)).
\]

For any $I\in\mathbb{R}$,
$ \widetilde{\mathcal{T}}^0_{I} = \left\{(0,0,I,\varphi,s); (\varphi,s) \in \mathbb{T}^{2}\right\}$
is an invariant 2D-torus under the flow of the system with frequency $\tilde{\omega}(I)= (\omega(I), 1)$ and is called a \emph{whiskered torus}.
For each whiskered torus $\tilde{\mathcal{T}}^0_{I}$, we have associated coincident stable and unstable 3D-manifolds called \emph{whiskers}, which we denote by
\begin{equation*}
W^0\widetilde{\mathcal{T}}^0_{I} = \left\{(p_0(\tau) , q_0(\tau) , I, \varphi, s):\tau\in\mathbb{R}, (\varphi , s)\in\mathbb{T}^2\right\}.
\end{equation*}

The union of all whiskered tori $\widetilde{\mathcal{T}}^0_{I}$
\[
\tilde{\Lambda}_0 = \left\{(0,0,I,\varphi,s):(I,\varphi,s)\,\in\,\mathbb{R}\times\,\mathbb{T}^{2}\right\}
\]
is a 3D-\emph{Normally Hyperbolic Invariant Manifold} (NHIM) with 4D-coincident stable and unstable invariant manifolds, forming a so-called \emph{separatrix}, given by
\begin{equation*}
W^0\tilde{\Lambda}_0 = \{(p_0(\tau),q_0(\tau),I,\varphi,s):\tau\in\mathbb{R}, (I,\varphi,s)\,\in\,\mathbb{R}^2\times\,\mathbb{T}^{2}\}.
\end{equation*}

For $0<\varepsilon\ll 1$, the NHIM $\tilde{\Lambda}_0$ is preserved to a NHIM $\tilde{\Lambda}_{\varepsilon}$~\cite[Sec.~4.2]{DdS08}.
Nevertheless, its stable manifold $W^{\text{s}}(\tilde{\Lambda}_{\varepsilon})$ and unstable manifold $W^{\text{s}}(\tilde{\Lambda}_{\varepsilon})$ no longer coincide, that is, the separatrix splits. The existence of this splitting can be detected by a perturbation argument in terms of the \emph{Melnikov potential}
\begin{equation}  \label{eq:melnikov}
\begin{aligned}
		\cL(I, \varphi, s) =\cL_h (I, \varphi, s)  :=
- \int_{-\infty}^\infty
& \bigl[ h(p_0(t), q_0(t), \varphi + t\,\omega(I), I, s + t)  \\
&  \quad - h(0, 0, \varphi + t\,\omega(I), I, s + t) \bigr ] dt.
\end{aligned}
\end{equation}

\begin{proposition}[DLS06]
\label{prop:homoclinic}
Given $(I,\varphi,s)\,\in\,\left[-I^{*},I^{*}\right]\,\times\,\mathbb{T}^{2}$, assume that the real function
\begin{equation}\label{eq: SM_critical_point}
\tau\,\in\,\mathbb{R}\,\longmapsto\,\mathcal{L}(I ,\varphi -\tau\,\omega(I) ,\,s-\tau)\,\in\,\mathbb{R}
\end{equation}
has a non-degenerate critical point $\tau^{*}\, =\, \tau^*(I,\varphi,s)$.
Then, for $0\,<\,\varepsilon$ small enough, there exists a unique transverse homoclinic point $\tilde{z}_{\varepsilon}$ to $\tilde{\Lambda}_{\varepsilon}$ of Hamiltonian~\eqref{eq:unstable}, which is $\varepsilon$-close to the point
$\tilde{z}^{*}(I,\varphi,s)\,=\,(p_{0}(\tau^{*}),q_{0}(\tau^{*}),I,\varphi,s)\,\in\,W^{0}(\tilde{\Lambda})$:
\begin{align*}
&\tilde{z}_{\varepsilon}=\tilde{z}_{\varepsilon}(I,\varphi,s)=(p_{0}(\tau^{*})+O(\varepsilon), q_{0}(\tau^{*})+O(\varepsilon),I,\varphi,s)\\
&\text{and }\tilde{z}_{\varepsilon}\in\,W^{u}(\tilde{\Lambda_ {\varepsilon}})\,\pitchfork\,W^{s}(\tilde{\Lambda_{\varepsilon}}).
\end{align*}
\end{proposition}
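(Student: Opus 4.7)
The plan is to apply the classical Melnikov theory for splitting of separatrices in the NHIM setting, and reduce the search for a homoclinic point of $H_\varepsilon$ to a scalar critical-point equation whose leading term is $\partial_\tau \cLh$.

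First, I would parametrize $W^{u,s}(\tilde\Lambda_\varepsilon)$ as $\varepsilon$-perturbations of the unperturbed separatrix. Using coordinates $(\tau, I, \varphi, s) \in \R \times \R \times \T \times \T$, set $\tilde z_0(\tau, I, \varphi, s) = (p_0(\tau), q_0(\tau), I, \varphi, s)$. Standard NHIM persistence theory then produces $C^r$-parametrizations
\[
		\tilde z^{u,s}_\varepsilon(\tau, I, \varphi, s) = \tilde z_0(\tau, I, \varphi, s) + \varepsilon\, \tilde z^{u,s}_1(\tau, I, \varphi, s) + O(\varepsilon^2),
\]
with $\tilde z^{u,s}_1$ determined by the linearized Hamiltonian flow together with decay to $\tilde\Lambda_\varepsilon$ as $\tau \to \mp\infty$, respectively.

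Second, I would encode the splitting as a scalar function. Because $W^{u,s}(\tilde\Lambda_\varepsilon)$ are exact Lagrangian in the extended phase space and because $(I,\, \varphi - \tau\omega(I),\, s - \tau)$ are first integrals of the unperturbed flow on the separatrix, the relative position of $W^u$ and $W^s$ is captured by a single splitting potential $\mathcal{S}_\varepsilon(\tau, I, \varphi, s)$ whose critical points in $\tau$ correspond to homoclinic intersections. Its leading term comes from evaluating $H_0$ along the parametrizations and using $\frac{d}{dt}H_0 = \{H_0, H_\varepsilon\} = \varepsilon\{H_0, h\}$, approximated at lowest order by the unperturbed homoclinic $(p_0(t), q_0(t), I, \varphi + t\omega(I), s + t)$. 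Splitting the integral into contributions from $(-\infty, \tau)$ for $W^u$ and $(\tau, +\infty)$ for $W^s$, and integrating by parts to move the $\{H_0, \cdot\}$ onto $h$, one obtains
\[
		\mathcal{S}_\varepsilon(\tau, I, \varphi, s) = \varepsilon\, \cLh(I, \varphi - \tau\omega(I), s - \tau) + O(\varepsilon^2).
\]

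Third, I would close the argument by the implicit function theorem. The homoclinic equation reduces, after dividing by $\varepsilon$, to
\[
		\partial_\tau \cLh(I, \varphi - \tau\omega(I), s - \tau) + O(\varepsilon) = 0.
\]
At a non-degenerate critical point $\tau^*$ of $\tau \mapsto \cLh(I, \varphi - \tau\omega(I), s - \tau)$ the second $\tau$-derivative is nonzero, so the implicit function theorem produces a unique $\tau_\varepsilon(I, \varphi, s) = \tau^* + O(\varepsilon)$, giving the required homoclinic point $\tilde z_\varepsilon = \tilde z^u_\varepsilon(\tau_\varepsilon, I, \varphi, s) = \tilde z^s_\varepsilon(\tau_\varepsilon, I, \varphi, s)$. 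Transversality follows from the non-degeneracy of the critical point together with the dimension count $\dim W^u + \dim W^s - \dim M = 4 + 4 - 5 = 3$, as the remaining parameters $(I, \varphi, s)$ sweep out a $3$-dimensional homoclinic manifold through $\tilde z_\varepsilon$. The main technical obstacle is justifying the reduction of the a priori multi-dimensional splitting to a scalar function of the single variable $\tau$; this rests on the exact Lagrangian structure of $W^{u,s}(\tilde\Lambda_\varepsilon)$ and on the flow-invariance of $(I, \varphi - \tau\omega(I), s - \tau)$ along the unperturbed separatrix, which together slave the transverse splitting in the remaining directions to a single generating function.
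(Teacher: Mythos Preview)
The paper does not prove this proposition at all: it is quoted from \cite{DLS06}, and the introduction explicitly lists Proposition~\ref{prop:homoclinic} among the results for which ``specific references are given'' rather than proofs. So there is no in-paper proof to compare against; your sketch is essentially an outline of the standard Melnikov argument that \cite{DLS06} carries out in detail.

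That said, one point in your sketch deserves care. You parametrize the perturbed invariant manifolds by the four variables $(\tau, I, \varphi, s)$ and reduce the splitting to a single scalar function of $\tau$. This is fine when $d = 1$, where $\dim W^{u,s}(\tilde\Lambda_\varepsilon) = 4$ and the unperturbed separatrix already fills the whole manifold. For $d > 1$, however, $\dim W^{u,s}(\tilde\Lambda_\varepsilon) = d + 3$, and the homoclinic orbit $\{(p_0(\tau), q_0(\tau))\}$ is only a one-dimensional curve inside the $d$-dimensional pendulum separatrix; your parametrization then covers only a $4$-dimensional submanifold of each $(d+3)$-dimensional invariant manifold. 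The reduction to a scalar splitting in $\tau$ still goes through, but it requires an additional argument (either exploiting the product structure of the pendulum part or invoking the exact Lagrangian machinery more carefully to handle the extra $d-1$ transverse directions). Your final paragraph flags this as ``the main technical obstacle'' but does not actually resolve it for $d > 1$; the full treatment is precisely what \cite{DLS06} supplies.
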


Using this Proposition we can introduce the notion of the \emph{scattering map}, which plays a central r\^ole in our mechanism for detecting diffusion.
Let $W$ be an open set of $\left[-I* , I^*\right]\times  \mathbb{T}^2$ such that the invariant manifolds of the NHIMH $\tilde{\Lambda}_{\varepsilon}$
intersect transversely along a homoclinic manifold $\Gamma_{\varepsilon}=\left\{\tilde{z}_{\varepsilon}(I,\varphi,s) , (I,\varphi,s)\in W\right\}$ and for any $\tilde{z}_{\varepsilon}\in\Gamma_{\varepsilon}$ there exists a unique $\tilde{x}_{\pm} = \tilde{x}_{\pm,\varepsilon}(I,\varphi,s)\in\tilde{\Lambda}$ such that $\tilde{z}_{\varepsilon}~\in~W_{\varepsilon}^{s}(\tilde{x}_-)\cap W_{\varepsilon}^{u}(\tilde{x}_+)$.
Let
$$H_{\pm} = \bigcup\left\{\tilde{x}_{\pm}=\tilde{x}_{\pm,\varepsilon}(I ,\varphi , s) : (I,\varphi,s)\in W\right\}.$$
The scattering map associated to $\Gamma_\epsilon$ is the map
\begin{eqnarray*}
S_{\varepsilon}: H_- & \longrightarrow & H_+\\
\tilde{x}_- &\longmapsto & S_{\varepsilon}(\tilde{x}_-) = \tilde{x}_+.
\end{eqnarray*}

Notice that the domain of definition of the scattering map depends on the homoclinic manifold chosen.
Therefore, for the characterization of the scattering maps, it is required to select the homoclinic manifold $\Gamma_\epsilon$,
which is determined by the function $\tau^*(I ,\varphi ,s)$.
Once a function $\tau^*(I,\varphi,s)$ is chosen, by the geometric properties of the scattering map, particularly its exactness, see \cite{DdS08}, the scattering map $S_{\varepsilon} = S_{\varepsilon,\tau^*}$ has the explicit form~\cite[eq. (9.9)]{DLS06}
\begin{equation}
\label{eq:SM1}
S_{\varepsilon}(I,\varphi,s) = \left(I \ + \varepsilon\partial_{\varphi}L^* + \mathcal{O}(\varepsilon^2), \varphi - \varepsilon\partial_{I} L^* +\mathcal{O}(\varepsilon^2), s \right),
\end{equation}
where $L^* = L^*(I,\varphi,s) $ is the \emph{Poincar\'e function defined by}
\begin{equation}\label{eq:PoincareFunction}
L^*(I,\varphi,s) = \mathcal{L}\left(I,\varphi -\tau^*(I,\varphi,s)\,\omega(I) , s-\tau^*(I,\varphi,s)\right).
\end{equation}

Notice that if $\tau^*(I,\varphi, s)$ is a critical point of~\eqref{eq: SM_critical_point},
$\tau^*(I,\varphi, s)-\sigma$ is a critical point of
\begin{align}\label{eq: SM_critical_point_sigma}
\tau\,\in\,\mathbb{R}\,\longmapsto&\,\mathcal{L}(I ,\varphi -(\tau+\sigma)\,\omega(I) ,\,s-(\tau+\sigma))\nonumber\\
=&
\mathcal{L}(I ,\varphi -\sigma\,\omega(I)-\tau\,\omega(I),s-\sigma-\tau)
\end{align}
Since $\tau^*(I,\varphi-\sigma\,\omega(I), s-\sigma)$ is a critical point of the right-hand side
of~\eqref{eq: SM_critical_point_sigma}, by the uniqueness
in $W$ we can conclude that
\[
\tau^*(I,\varphi-\sigma\,\omega(I), s-\sigma)=\tau^*(I,\varphi, s)-\sigma.
\]
Thus, by~\eqref{eq:PoincareFunction}, the Poincar\'e function $L^*$ satisfies
\[
L^*(I ,\varphi -\sigma\,\omega(I),s-\sigma)=L^*(I ,\varphi,s), \quad \sigma\in\R,
\]
and, in particular, for $\sigma = s$,
\[
L^*(I ,\varphi -s\,\omega(I),0)=L^*(I ,\varphi,s).
\]
In other words, the Poincar\'e function $L^*$ is invariant under the flow $\dot\varphi=\omega(I), \dot s=1$ or, equivalently, is constant along any line in the $(\varphi,s)$-plane $\varphi - s\,\omega(I)=\text{ constant}$.

These previous equations are telling us that we can reduce the variables in both the Poincar\'e function $L^*$ and the time $\tau^*$. Indeed, writing them for $\sigma=s$ and introducing the variable
\begin{equation}
\theta = \varphi - s\,\omega(I),\label{eq:def_theta}
\end{equation}
we can define the \emph{reduced Poincar\'{e} function} $\mathcal{L}^{*}$ as well as the reduced time $\bar{\tau^*}$, defined only in the variables $(I,\theta)$, as
\[
\mathcal{L}^{*}(I,\theta) := L^*(I,\varphi - s \,\omega(I)  , 0) = L^*(I,\varphi,s),
\]
\begin{align}\label{eq:tau_theta}
\bar{\tau^*}(I,\theta) := \tau^*(I,\varphi-s\,\omega(I),0)=\tau^*(I,\varphi,s) - s.
\end{align}
Moreover we also have
\begin{equation}
\label{eq:useful_reduced_poincare_function}
\mathcal{L}^*(I,\theta) = \mathcal{L}(I,\theta - \bar{\tau}^*(I,\theta)\,\omega(I), -\bar{\tau}^*(I,\theta)).
\end{equation}

Note that the variable $s$ is fixed under the scattering map~\eqref{eq:SM1}.
As a consequence, we can consider, for instance, the section
\[
		\Lambda_\epsilon = \tilde{\Lambda}_\epsilon \cap \{s = 0\}
\]
and the scattering map $S_\epsilon$ is well defined on $\Lambda_\epsilon$.

In the variables $(I,\theta)$ introduced in~\eqref{eq:def_theta}, the scattering map has the simple form
\[
		\mathcal{S}_{\varepsilon}(I,\theta) = \cS_{\varepsilon}(I, \varphi) = \left( I + \varepsilon\frac{\partial \mathcal{L}^*}{\partial\theta}(I,\theta) + \mathcal{O}(\varepsilon^2) ,
 \theta - \varepsilon\frac{\partial \mathcal{L}^*}{\partial I}(I,\theta) + \mathcal{O}(\varepsilon^2)   \right).
\]
So, up to $\mathcal{O}(\varepsilon^2)$ terms, $\mathcal{S}_{\varepsilon}(I,\theta)$ is the $-\varepsilon$ times flow of the \emph{autonomous} Hamiltonian $\mathcal{L}^*(I,\theta)$.
In particular, a finite number of iterates under the scattering map follow the level curves of $\mathcal{L}^*$ up to $\mathcal{O}(\varepsilon^2)$.

\begin{remark}
		$\cS_\epsilon$ is close to identity only when there is no phase shift, i.e., thanks to the fact that we are only dealing with an \emph{uncoupled} a priori unstable system. If we introduce a coupling term  as in \eqref{eq:phase-shift}, then
		\[
		  \cS_\epsilon(I, \theta) = (I, \theta + \gamma(I)) + \cO(\epsilon)
		\]
		where $\gamma(I)$ is the phase shift. In this case $\cS_\epsilon$ is no longer close to identity.
\end{remark}

Along this paper, both $\tau^*(I,\varphi,s)$ and $\bar{\tau}^*(I,\theta)$ will be used at our convenience.

We now introduce another map defined on the NHIM $\Lambda_\epsilon$. Since $\tilde{\Lambda}_\varepsilon$ is invariant under the Hamiltonian flow and $\Lambda_\epsilon$ is a global section, there exists a Poincar\'e map
\[
  T_\epsilon: \Lambda_\epsilon \to \Lambda_\epsilon.
\]
The map $T_\epsilon$ preserves the restriction of the symplectic form to $\Lambda_\epsilon$, and has the expansion
\[
T_\epsilon(I, \varphi)	
= (I, \varphi + \omega(I)) + \cO(\varepsilon).
\]
We call this map the \emph{inner map}.

In the sequel, we call a measure on a manifold a \emph{smooth measure} if it is given by a non-zero density multiplied by the volume form supported on an open set.

\begin{lemma}
		There exists $\epsilon_0 > 0$ such that for all $0 < \epsilon < \epsilon_0$ the following hold. There exists  $\gamma_-, \gamma_+ \subset \Lambda_\epsilon$ that are graphs over the variable $\theta$ and invariant under $T_\epsilon$, such that the $T_\epsilon$-invariant subset of $\Lambda_\epsilon$ bounded by $\gamma_-$ and $\gamma_+$ contains
		\[
				[I^-, I^+] \times \T \cap \Lambda_\epsilon.
		\]
		In particular, the dynamics of $T_\epsilon$ on $\Lambda_\epsilon$ preserves a smooth measure whose support contains $[I^-, I^+] \times \T \cap \Lambda_\epsilon$.
\end{lemma}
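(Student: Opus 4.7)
The plan is to apply Moser's analytic invariant-circle theorem (KAM for $2$D twist maps) to the inner map $T_\epsilon$. Since
\[
T_\epsilon(I,\varphi) = (I,\varphi+\omega(I)) + \cO(\varepsilon)
\]
is real analytic and exact symplectic on the cylinder $\Lambda_\epsilon \cong \R \times \T$, and the strict convexity of $G$ gives $\omega'(I) \neq 0$, the map $T_\epsilon$ is a real analytic exact symplectic twist diffeomorphism of a neighborhood of $[I^-, I^+] \times \T$ once $\epsilon$ is small enough. This reduces the lemma to a standard application of analytic KAM.

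First I would choose $\delta > 0$ small enough that the analyticity of $h + g$ and the non-vanishing of $\omega'$ extend to $I \in [I^- - 2\delta, I^+ + 2\delta]$; this is possible because $U$ is open and the perturbation lies in the Banach space $\cA_\sigma(U) + \cP_\sigma$ of functions analytic on a fixed complex strip. Using the monotonicity of $\omega$, fix two Diophantine numbers $\alpha_-, \alpha_+$ whose preimages $I_\pm^0 := \omega^{-1}(\alpha_\pm)$ satisfy $I_-^0 \in (I^- - \delta,\, I^-)$ and $I_+^0 \in (I^+,\, I^+ + \delta)$. Moser's theorem in its analytic form then supplies $\epsilon_0(h+g) > 0$ such that for every $\epsilon \in (0, \epsilon_0)$ the map $T_\epsilon$ possesses analytic invariant circles $\gamma_\pm \subset \Lambda_\epsilon$ with rotation numbers $\alpha_\pm$, each a graph $I = \psi_\pm(\varphi)$ with $\psi_\pm$ analytic and $\cO(\varepsilon)$-close to the constant $I_\pm^0$.

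Since $\gamma_-$ and $\gamma_+$ are disjoint homotopically non-trivial Jordan curves on the cylinder, they bound a $T_\epsilon$-invariant closed annular region $A_\epsilon$; after shrinking $\epsilon_0$ if necessary, $\psi_-(\varphi) < I^- < I^+ < \psi_+(\varphi)$ holds uniformly in $\varphi$, so $[I^-, I^+] \times \T \cap \Lambda_\epsilon \subset A_\epsilon$. Because $T_\epsilon$ preserves the restriction to $\Lambda_\epsilon$ of the ambient symplectic form, which in dimension two is a non-vanishing volume form $\Omega_\epsilon$, the restriction of $\Omega_\epsilon$ to the open invariant set $\mathrm{int}(A_\epsilon)$ has smooth positive density and is $T_\epsilon$-invariant; its support equals $A_\epsilon$ and therefore contains $[I^-, I^+] \times \T \cap \Lambda_\epsilon$, yielding the required smooth invariant measure.

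The only delicate point is guaranteeing that $\epsilon_0$ depends solely on $h + g$ and the fixed data $(V, G, I^\pm, \delta, \alpha_\pm)$, not on the diffusion time. This is immediate from the analytic KAM framework: the norm of $T_\epsilon - T_0$ in a fixed complex neighborhood of $[I^- - 2\delta, I^+ + 2\delta] \times \T$ is $\cO(\varepsilon)$ with a constant controlled by $\|h\|_\sigma + \|g\|_\sigma$, which is precisely the input Moser's theorem needs. I do not anticipate any serious obstacle here; the lemma is essentially a direct consequence of the persistence of two Diophantine invariant circles flanking the region of interest.
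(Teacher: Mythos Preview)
The paper states this lemma without proof, evidently treating it as a standard consequence of KAM theory for analytic area-preserving twist maps. Your argument via Moser's invariant-circle theorem is exactly the expected one and is correct: the inner map $T_\epsilon$ is an exact symplectic analytic twist map of the cylinder (being the Poincar\'e return map of a Hamiltonian flow on a symplectic NHIM), so two Diophantine invariant graphs flanking $[I^-,I^+]$ persist for $\epsilon$ small, and the restricted symplectic form on the bounded annulus between them supplies the required smooth invariant measure.
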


We now restrict $\Lambda_\epsilon$ to the compact invariant set between $\gamma_\pm$, which is still called $\Lambda_\epsilon$.

The following statement is a minor modification of the main result of \cite{GdM20},
and is our main technical tool for construction of diffusion orbits. This result is non-perturbative, so we formulate it for a general Hamiltonian $H$ with a compact invariant NHIM $\Lambda$.

\begin{proposition}\label{prop:shadowing}
		Assume the Hamiltonian $H$ admits a compact NHIM $\Lambda$ on which multiple scattering maps $\cS_j$ are defined on open subsets of $\Lambda$. Assume that the inner map $T$ preserves a smooth measure on $\Lambda$, and each $\cS_j$ maps positive (Lebesgue) measure sets to positive measure sets and zero measure sets to zero measure sets.

		Let $x_0, \dots, x_{N-1}$ be a pseudo-orbit
		\footnote{We use the term pseudo-orbit to stress that it is not an orbit of the Hamiltonian dynamics, even though it is an orbit of the polysystem (i.e. iteration under multiple maps).}
		under the iterate of either the inner map or one of the scattering maps, that is,
		\[
				x_{i+1} = f_i (x_i), \quad 1\le i \le N-1,
		\]
		where $f_i$ is either $T$ or one of the scattering maps $S_j$. Then for any $\delta > 0$,  there exists a trajectory $z: \R \to M$ of $H_\epsilon$ shadowing the pseudo-orbit $x_i$, i.e., there exists $t_0 < \cdots < t_N$ such that
		\[
		  \dist(x_i, z(t_i)) < \delta, \quad 0 \le i \le N.
		\]
\end{proposition}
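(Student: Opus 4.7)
The plan is to adapt the shadowing construction of~\cite{GdM20}, which treats the case of a single scattering map, to our setting of finitely many scattering maps $\cS_j$ and a general (non-perturbative) Hamiltonian $H$. The two geometric ingredients are the inclination ($\lambda$-)lemma for the NHIM $\Lambda$ and the measure-preservation properties of $T$ and the $\cS_j$ assumed in the statement.

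First I would select, for each index $i$ with $f_i = \cS_j$, the transverse homoclinic manifold $\Gamma_i \subset W^u(\Lambda)\pitchfork W^s(\Lambda)$ along which $\cS_j$ is defined near $x_i$. Around each $x_i$ I would then build a small \emph{correctly aligned window} $R_i$ in the ambient phase space, whose ``unstable'' directions are tangent to $W^u(\Lambda)$ and whose ``stable'' directions are tangent to $W^s(\Lambda)$, so that $R_i \cap \Lambda$ is a small box around $x_i$ of positive measure.

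The heart of the construction is the propagation of these windows along the pseudo-orbit. For an inner step $f_i = T$, the Hamiltonian flow pushes $R_i$ forward to its first return near $\Lambda$; measure-preservation of $T$ prevents tangential shrinking, so the image is correctly aligned with $R_{i+1}$. For a scattering step, a sub-window $R_i' \subset R_i$ whose unstable direction contains a disk transverse to $W^s(\Lambda)$ near the foot point of $\Gamma_i$ will, by the inclination lemma for NHIM, accumulate $C^1$-closely on $W^u(\Lambda)$ along $\Gamma_i$; a further segment of the flow then attaches this image to a neighborhood of $\cS_j(x_i) = x_{i+1}$ along the stable direction, yielding a window correctly aligned with $R_{i+1}$. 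The hypothesis that each $\cS_j$ sends positive-measure sets to positive-measure sets guarantees that the nested preimages
\[
\bigcap_{k=0}^{i}(f_{i-1}\circ\cdots\circ f_0)^{-1}(R_k) \subset R_0
\]
remain of positive measure for every $i \le N$. Applying the standard covering-relations existence theorem for correctly aligned windows, exactly as in~\cite{GdM20}, then produces a true trajectory $z(t)$ of $H$ whose consecutive returns to $\Lambda$ are within $\delta$ of $x_0, \dots, x_N$ at times $t_0 < \cdots < t_N$.

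The main obstacle I expect is establishing \emph{uniformity} of the window-propagation constants across the finite family of scattering maps. In~\cite{GdM20} only one scattering map appears, so all of the relevant quantitative data (cone sizes, inclination-lemma waiting times, radii of admissible stable and unstable disks, the transversality constant of the homoclinic intersection) are fixed once and for all. Here the $\Gamma_i$ alternate among finitely many families, and one must check that these constants can be taken uniformly over the whole collection; this follows from compactness of $\Lambda$ together with continuity of each $\cS_j$ and of the associated homoclinic manifolds, but it is the only genuinely new ingredient beyond a direct citation of~\cite{GdM20}.
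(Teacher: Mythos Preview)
The paper's proof and yours diverge substantially. The paper does not rebuild the correctly-aligned-windows machinery; instead it quotes Lemma~3.2 of \cite{GdM20} as a black box (stated in the appendix as Lemma~\ref{lem:shadowing-primitive}), which shadows pseudo-orbits of the special form $y_{i+1} = T^{m_i}\circ S_{j_i}\circ T^{n_i}(y_i)$ provided $n_i, m_i$ are large enough. The entire content of the paper's argument is then a Poincar\'e-recurrence construction: the given pseudo-orbit is rewritten as $x_{i+1}=T^{s_i}\circ S_{j_i}(x_i)$, and one builds nested positive-measure sets $E_i\subset B_{\delta/2}(x_0)$ and $F_i\subset B_{\delta/2}(x_{i+1})$ by repeatedly invoking recurrence of $T$ (and the measure hypotheses on the $\cS_j$) to insert arbitrarily long $T$-excursions before and after each $S_{j_i}$. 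This produces a modified pseudo-orbit $\{y_i\}$ that is $\delta/2$-close to $\{x_i\}$ and to which the black-box lemma applies directly.

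Your sketch omits precisely this recurrence step, and that is a genuine gap rather than a matter of presentation. The inclination lemma needs a long stretch of inner dynamics to make the transported unstable disk $C^1$-close to $W^u(\Lambda)$ near the next foot point; in your outline this appears only as ``a further segment of the flow,'' with no mechanism guaranteeing that after this extra segment the orbit lands near $x_{i+1}$ rather than somewhere else on $\Lambda$. When two scattering steps occur consecutively in the pseudo-orbit, or are separated by only a few $T$-iterates, the window alignment simply fails without inserting additional $T$-time, and inserting it while remaining $\delta$-close to the prescribed $x_{i+1}$ is exactly what Poincar\'e recurrence buys. The measure hypotheses in the statement are there for recurrence, not (as you suggest) merely to keep nested preimages of windows nonempty. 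The obstacle you single out---uniformity of constants across finitely many scattering maps---is real but minor, handled by compactness and finiteness as you note; the recurrence argument is the missing idea.
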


Theorem 3.7 of \cite{GdM20} states that if the pseudo-orbit only involves the scattering maps, then the shadowing result holds. However, the proof in \cite{GdM20} can be adapted to prove Proposition~\ref{prop:shadowing} with minor changes, see Appendix~\ref{sec:shadowing}.

\begin{remark}
		Proposition~\ref{prop:shadowing} does not provide an estimate of ``diffusion time'', i.e., upper bound estimates on $t_i$. This is due to the lack of return time estimates for the inner dynamics. See Remark 3.15 of \cite{GdM20}.
\end{remark}

\subsection{Conditions on the Melnikov potential}

\begin{definition}\label{def:G}
Define $\cG(I^-, I^+)$ to be the set of functions $h$ in the space $\cA_\sigma(U)$ introduced in Def.~\ref{def:spaces} for which the following holds:
\begin{itemize}
		\item There exists intervals $(I_j^-, I_j^+)$, $j = 1, \dots, k$ such that
				\[
				  I_1^- < \dots < I_k^-, \quad
(I_j^-, I_j^+) \cap (I_{j+1}^-, I_{j+1}^+) \ne \emptyset, \quad j = 1, \dots, k-1,
				\]
				and
				\[
						[I^-, I^+] \subset \bigcup_j (I_j^-, I_j^+).
				\]
	\item			For each $j$, there exists a smooth mapping
				\[
						(\varphi_j^*, s_j^*): [I_j^-, I_j^+] \to \T \times \T ,
				\]
				such that for each $I \in [I_j^-, I_j^+]$, $(\varphi_j^*, s_j^*)(I)$ is a non-degenerate local minimum of the function
				\[
				  \cLh (I, \cdot, \cdot).
				\]
\end{itemize}
\end{definition}
We will show (Proposition~\ref{prop:generic}) that for a generic $h$, the associated Melnikov potential $\cL_h(I, \varphi, s)$ satisfies the conditions in Definition~\ref{def:G}. In fact, the local minima $(\varphi^*_j, s^*_j)$ can be chosen as local extension of the global minima of $\cL_h(I, \cdot, \cdot)$. Moreover, in Lemma~\ref{lem:critical-curve} we will see that they generate local minima $\theta^*_j(I)$ of $\cL^*_h(I,\cdot)$. In Figure~\ref{fig:generic-function}, we give an illustration of the position of global minima over $\T$ for a typical reduced Poincar\'e function $\mathcal{L}^*_h(I,\theta)$.

 \begin{figure}[ht]
 \centering
 \includegraphics[height=3in]{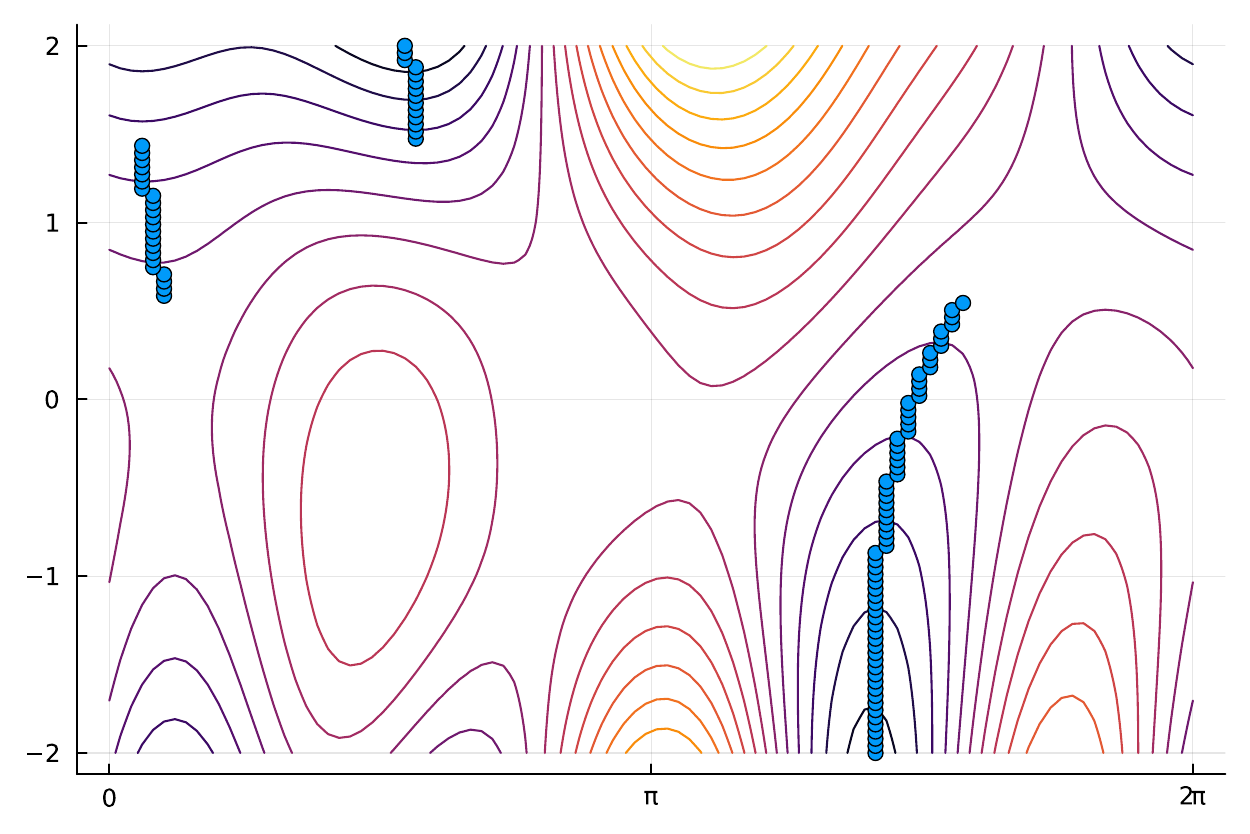}
 \caption{The levels curves and global minima of a typical reduced Poincar\'e function $\mathcal{L}^*_h(I,\theta)$ defined on $\R \times \T$. Note: the vertical coordinate is $I$ and horizontal coordinate is $\theta$.}
 \label{fig:generic-function}
\end{figure}
\begin{proposition}\label{prop:generic}
For any $h \in \cA_\sigma(U)$, there exists an open and dense set $\cG_h \subset \cP_\sigma$ such that $h + g \in \cG(I^-, I^+)$ for all $g \in \cG_h$.
\end{proposition}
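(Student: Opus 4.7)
The plan is to prove that $\cG_h := \{g \in \cP_\sigma \st h + g \in \cG(I^-, I^+)\}$ is both open and dense in $\cP_\sigma$. For openness, the map $g \mapsto \cL_{h+g}$ is affine and continuous from $\cP_\sigma$ into $C^r([I^-, I^+] \times \T^2)$ for any $r$; the Melnikov integral converges absolutely and differentiation under the integral sign is justified by the exponential decay of $p_0(t), q_0(t)$. If $g_0 \in \cG_h$ with overlapping intervals $\{(I_j^-, I_j^+)\}$ and smooth curves $(\varphi_j^*, s_j^*)(I)$ of non-degenerate local minima of $\cL_{h+g_0}(I, \cdot, \cdot)$, the Hessian at these minima is uniformly positive-definite on compact subintervals. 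For $g$ in a small $\cP_\sigma$-neighborhood of $g_0$, the implicit function theorem yields perturbed smooth curves of non-degenerate local minima on slightly shrunken intervals, which still overlap to cover $[I^-, I^+]$.

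Density is the main content. Writing $g(q, \varphi, t) = \sum_{k, \ell, m} g_{k,\ell,m} e^{2\pi i(k\cdot q + \ell\varphi + mt)}$, direct computation gives
\[
\cL_g(I,\varphi,s) = -\sum_{k \ne 0,\, \ell, m} g_{k,\ell,m}\, A_k(\ell\omega(I)+m)\, e^{2\pi i(\ell \varphi + m s)},
\]
where $A_k(\xi) := \int_\R (e^{2\pi i k\cdot q_0(t)} - 1)\, e^{2\pi i \xi t}\,dt$ is a non-trivial real-analytic function of $\xi$ for every $k \ne 0$, vanishing at infinity by Riemann--Lebesgue. Using the freedom to superpose arbitrary $k \in \Z^d \setminus \{0\}$ within each fixed Fourier mode $(\ell, m)$, I would show that the coefficient of $e^{2\pi i(\ell\varphi + m s)}$ in $\cL_g$, viewed as a function of $I$, can be made to approximate any prescribed real-analytic function on $[I^-, I^+]$. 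Consequently the image of $g \mapsto \cL_g$ is dense (in the analytic topology inherited from $\cP_\sigma$) in the space of admissible Melnikov potentials on $[I^-, I^+] \times \T^2$, modulo the $(\ell, m) = (0, 0)$ mode which is irrelevant for critical-point structure.

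Granted this density, I would invoke the parametric Thom transversality theorem applied to the one-parameter family $f_I := \cL_{h+g}(I, \cdot, \cdot) \in C^\infty(\T^2)$, $I \in [I^-, I^+]$. For $g$ in a residual subset of a $\cP_\sigma$-neighborhood of any $g_0$, the family $\{f_I\}$ is generic in the sense of Cerf: at all but finitely many parameter values $f_I$ is Morse with a unique global minimum, and at the exceptional values $I^{(1)} < \cdots < I^{(N)}$ only codimension-one degeneracies arise, namely a birth/death of a Morse pair, or a coincidence of critical values between two distinct Morse critical points (the global-minimum-tie case). Between consecutive bifurcation values the global minimum of $f_I$ is unique, non-degenerate, and varies smoothly in $I$ by the implicit function theorem; each such smooth curve of global minima extends past the bifurcation endpoints (as a non-global local non-degenerate minimum) into an open neighborhood, producing the overlapping intervals $(I_j^-, I_j^+)$ that cover $[I^-, I^+]$, together with the smooth curves $(\varphi_j^*, s_j^*)$ required by Definition~\ref{def:G}. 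The main obstacle is the density claim: verifying that $\{I \mapsto A_k(\ell\omega(I)+m)\}_{k \in \Z^d \setminus \{0\}}$ spans a dense subspace of real-analytic functions on $[I^-, I^+]$ for each $(\ell, m)$, subject to the analytic decay constraints inherited from $\cP_\sigma$; this should follow from linear independence of the $A_k$ combined with the non-triviality of the Fourier transform of $e^{2\pi i k\cdot q_0(t)} - 1$, but is the most technical point of the argument.
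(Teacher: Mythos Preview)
Your openness argument is fine and matches the paper. The density argument, however, has a genuine gap, and the paper proceeds quite differently.

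The gap is exactly where you flag it: the claim that $\{I \mapsto A_k(\ell\omega(I)+m)\}_{k \ne 0}$ spans a dense subspace of analytic functions on $[I^-,I^+]$. Note that for fixed $(\ell,m)$ every such function factors through the single scalar $\xi = \ell\omega(I)+m$, so you would need the family $\{A_k(\xi)\}_{k \ne 0}$ of Fourier transforms of $e^{2\pi i k\cdot q_0(\cdot)}-1$ to be total in a suitable function space on the relevant $\xi$-interval. Nothing in the hypotheses on $V$ or on the homoclinic $q_0$ gives this, and it is not clear how linear independence alone would yield density. Moreover, even granting density, what parametric transversality actually requires is that the jet-evaluation map $g \mapsto j^r_{(\varphi,s)}\cL_g(I,\varphi,s)$ be a \emph{submersion} at each point, not merely have dense image; and invoking Cerf theory in the real-analytic category (rather than $C^\infty$) is itself delicate.

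The paper bypasses all of this with a much more elementary device. It does not attempt to realize arbitrary Melnikov potentials; instead it shows (Lemma~4.3) that one can prescribe the $(\varphi,s)$-derivatives of $\cL_g$ up to order three at \emph{any single point} $(I_0,\varphi_0,s_0)$. The construction is simply $g_0(q,\varphi,t)=R_0(q)G_0(\varphi,t)$, where $R_0$ is a bump concentrated near $q_0(0)$ along the homoclinic so that $\int R_0(q_0(t))f(t)\,dt \approx f(0)$; then $\cL_{g_0}(I,\varphi,s)\approx -G_0(\varphi,s)$, independent of $I$, and one takes $G_0$ with the desired jets and an analytic approximation of $g_0$. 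This immediately gives surjectivity of $g \mapsto (\partial_{(\varphi,s)},\partial^2_{(\varphi,s)},\partial^3_{(\varphi,s)})\cL_g(x)$ onto $\R^9$ at every $x$. A standard codimension count via Sard (applied on finite-dimensional slices of $\cP_\sigma$) then shows that generically there is no $(I,\varphi,s,v)$ with $\partial_{(\varphi,s)}\cL=0$, $\partial^2_{(\varphi,s)}\cL\,v=0$, and $\partial^3_{(\varphi,s)}\cL[v,v,v]=0$ simultaneously; hence every degenerate critical point has a nonvanishing third directional derivative and cannot be a local extremum. Once all global minima of $\cL_{h+g}(I,\cdot,\cdot)$ are non-degenerate, the implicit function theorem extends them to overlapping smooth curves, which is all Definition~\ref{def:G} asks for. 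No Cerf-type classification of the bifurcation set is needed.
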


\begin{proposition}
\label{prop:diffusion}
		Suppose $h \in \cG(I^-, I^+)$, then there exists $\epsilon_0(h) > 0$ such that for all $\epsilon \in (0, \epsilon_0)$, there exists an orbit $(p, q, I, \varphi)(t)$ of \eqref{eq:unstable} and $T > 0$ such that
		\[
		  I(0) < I^- < I^+ < I(T).  
		\]
\end{proposition}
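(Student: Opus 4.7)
The plan is to construct an ``ascending ladder'' pseudo-orbit for the polysystem generated by the inner map $T_\epsilon$ and the scattering maps $\cS_{\epsilon,j}$ coming from the branches of local minima, and then invoke the shadowing result Proposition~\ref{prop:shadowing}. For each $j$, the smooth family of non-degenerate local minima $(\varphi^*_j(I),s^*_j(I))$ of $\cL_h(I,\cdot,\cdot)$ extends, via the implicit function theorem applied to $\partial_\tau\cL(I,\varphi-\tau\omega(I),s-\tau)=0$, to a smooth family $\tau^*_j(I,\varphi,s)$ of non-degenerate critical points of \eqref{eq: SM_critical_point}. Proposition~\ref{prop:homoclinic} then furnishes the homoclinic manifold $\Gamma_{\epsilon,j}$ and the associated scattering map $\cS_{\epsilon,j}$. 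Using the reduction formulas \eqref{eq:def_theta}--\eqref{eq:useful_reduced_poincare_function}, the reduced Poincar\'e function $\cL^*_j(I,\theta)$ inherits, for each $I\in[I_j^-,I_j^+]$, a non-degenerate local minimum at $\theta^*_j(I)=\varphi^*_j(I)-s^*_j(I)\omega(I)$; this is the content of the already-announced Lemma~\ref{lem:critical-curve}.

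Next I build the ladder within a fixed branch $j$. Since $\cS_{\epsilon,j}$ agrees with the $(-\epsilon)$-time map of the Hamiltonian flow of $\cL^*_j$ up to $\cO(\epsilon^2)$, its iterates approximately follow the level curves of $\cL^*_j$. Starting at $(I_0,\theta_0)$ with $\theta_0=\theta^*_j(I_0)+\eta$ for some small $\eta>0$, the non-degeneracy of the minimum in $\theta$ gives $\partial_\theta\cL^*_j(I_0,\theta_0)>0$, so the first iterates of $\cS_{\epsilon,j}$ strictly increase $I$. Running the iteration for $\cO(1/\epsilon)$ steps reaches the $I$-maximum of the chosen level curve, at some point $(I_1,\theta^*_j(I_1))$ with $I_1>I_0$. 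There the scattering map stalls, but iterates of the inner map $T_\epsilon$, whose leading part is $(I,\varphi)\mapsto(I,\varphi+\omega(I))$, preserve $I$ while rotating $\theta$. Since $\omega'(I)\ne 0$ by strict convexity of $G$, the resonant set $\{I:\omega(I)\in\mathbb{Q}\}$ is countable; for $I_1$ outside it the orbit $\{n\omega(I_1)\}_{n\in\N}$ is dense in $\T$, so finitely many iterates of $T_\epsilon$ bring $\theta$ back to $\theta^*_j(I_1)+\eta$. Repeating this scatter-then-rotate cycle yields a pseudo-orbit whose $I$-component climbs monotonically across $[I_j^-,I_j^+]$.

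The handoff between consecutive branches exploits the overlap condition: at the top of branch $j$, $I$ lies in $(I_j^-,I_j^+)\cap(I_{j+1}^-,I_{j+1}^+)$, and inner iterates reposition $\theta$ close to $\theta^*_{j+1}(I)+\eta$, so the climb resumes on branch $j+1$. Concatenating the branch climbs for $j=1,\dots,k$ produces a pseudo-orbit whose $I$-coordinate crosses from below $I^-$ to above $I^+$. Applying Proposition~\ref{prop:shadowing}, together with the smooth inner-invariant measure on $\Lambda_\epsilon$ coming from the preceding lemma, yields a genuine trajectory of $H_\epsilon$ that shadows the pseudo-orbit to within any $\delta>0$; choosing $\delta$ smaller than the endpoint slack gives $I(0)<I^-$ and $I(T)>I^+$.

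I expect the main obstacle to be the single-branch climb: controlling that the $\cO(\epsilon^2)$-per-step deviation of $\cS_{\epsilon,j}$ from the exact Hamiltonian flow of $\cL^*_j$ does not drift the pseudo-orbit away from the intended level curve before the inner-map reset. The accumulated error over $\cO(1/\epsilon)$ steps is only $\cO(\epsilon)$, negligible compared to the $\cO(1)$ motion along the level curve, but the quantitative bookkeeping here is the technical heart of the argument. A secondary subtlety is avoiding resonant $I$ at the stopping time of each climb, which is handled by perturbing the $I$-target within the full-measure non-resonant set.
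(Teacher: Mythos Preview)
Your overall strategy matches the paper's: build scattering maps from the minimum branches, climb in $I$ along level curves of $\cL^*_j$ via scattering iterates, bridge with the inner map, concatenate branches via the overlap, and shadow with Proposition~\ref{prop:shadowing}. The scattering-climb error control you flag is exactly what the paper handles with a flow-box argument (the Claim inside the proof of Proposition~\ref{prop:transition}), and your $\cO(\epsilon^2)\times\cO(1/\epsilon)=\cO(\epsilon)$ bookkeeping is correct.

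The gap is the inner-map reset. Density of $\{n\omega(I_1)\}$ for irrational $\omega(I_1)$ gives some $T_0^N$-iterate near the target $\theta$, but $N$ is not bounded uniformly in $I_1$; to pass to $T_\epsilon$ you need $\epsilon_0$ small in terms of this $N$, which depends on the $I_1$ you landed at, which in turn was produced by $\cO(1/\epsilon)$ scattering steps---a circularity that blocks a clean choice of $\epsilon_0(h)$. Perturbing $I_1$ into the non-resonant set does not help, since the first-hitting-time function is unbounded there. (A related problem: if $m(I):=\cL^*_j(I,\theta^*_j(I))$ is increasing, the level curve through $(I_0,\theta^*_j(I_0)+\eta)$ reaches its $I$-maximum already at $I_1-I_0=\cO(\eta^2)$, so your scheme may require unboundedly many resets.) The paper sidesteps both issues by replacing single-orbit density with a twist argument on two-dimensional balls (Lemma~\ref{lem:twist}): since $\omega'\neq 0$, $T_0^N$ shears $B_\delta(x)$ into a long strip that meets any prescribed $C^1$ graph over $I$, for $N$ depending only on $\delta$ and the graphs---not on the particular $I$-level; then $\epsilon_0$ is chosen so that $T_\epsilon^N$, being $\cO(N\epsilon)$-close in $C^1$, still covers the target. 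Coupled with a \emph{finite} cover of each $[I_j^-,I_j^+]$ by level-curve segments that are graphs over $I$ (Lemma~\ref{lem:ladder}), this yields finitely many transitions and a uniform $\epsilon_0(h)$.
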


Proposition~\ref{prop:generic} will be proven later. In the sequel, let us fix $h \in \cG(I^-, I^+)$ and prove existence of a diffusion trajectory.

\begin{lemma}
		Suppose $h \in \cG(I^-, I^+)$, then there exists neighborhoods $U_j(I)$ of $(\varphi^*_j(I), s^*_j(I))$ for $I \in [I_j^-, I_j^+]$, and functions $\tau_j^*(I, \varphi, s)$
		\[
				(\varphi, s) \in U_j(I) \mapsto \tau_j^*(I, \varphi, s), \quad
				\tau_j^*(I, \varphi_j^*(I), s_j^*(I)) = 0,
		\]
	depending smoothly in $(\varphi, s, I)$,  such that each $\tau_j^*$ is a non-degenerate local minimum of
\begin{equation}  \label{eq:tau-Melnikov}
		  \tau \mapsto \cLh (I, \varphi - \tau\omega(I) , s - \tau).
\end{equation}
\end{lemma}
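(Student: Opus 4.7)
The plan is to apply the implicit function theorem to the equation $\partial_\tau F=0$ where $F(\tau,I,\varphi,s):=\cLh(I,\varphi-\tau\omega(I),s-\tau)$, using the non-degenerate local minimum $(\varphi_j^*(I),s_j^*(I))$ of $\cLh(I,\cdot,\cdot)$ as a starting guess at $\tau=0$. The main observation is that the one-parameter family on which we are looking for critical points corresponds to moving in the $(\varphi,s)$-plane along the direction $(-\omega(I),-1)$, so both derivatives of $F$ in $\tau$ are controlled by the gradient and Hessian of $\cLh$ in $(\varphi,s)$.

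First I would compute
\[
\partial_\tau F(\tau,I,\varphi,s)=-\omega(I)\,\partial_\varphi\cLh(I,\varphi-\tau\omega(I),s-\tau)-\partial_s\cLh(I,\varphi-\tau\omega(I),s-\tau),
\]
and evaluate at $\tau=0$, $\varphi=\varphi_j^*(I)$, $s=s_j^*(I)$. Since Definition~\ref{def:G} says $(\varphi_j^*(I),s_j^*(I))$ is a critical point of $\cLh(I,\cdot,\cdot)$, both partials vanish and hence $\partial_\tau F=0$ there. Next I would compute the second derivative
\[
\partial_\tau^2 F(0,I,\varphi_j^*(I),s_j^*(I))=\bigl(\omega(I),1\bigr)^{\!\top}\mathrm{Hess}_{(\varphi,s)}\cLh\bigl(I,\varphi_j^*(I),s_j^*(I)\bigr)\bigl(\omega(I),1\bigr),
\]
which is strictly positive because the Hessian is positive definite at the non-degenerate local minimum.

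With $\partial_\tau F=0$ and $\partial_\tau^2 F>0$ at the base point, the implicit function theorem applied to $\partial_\tau F(\tau,I,\varphi,s)=0$ yields, for each fixed $I\in[I_j^-,I_j^+]$, a smooth function $\tau_j^*(I,\varphi,s)$ defined on a neighborhood of $(\varphi_j^*(I),s_j^*(I))$ with $\tau_j^*(I,\varphi_j^*(I),s_j^*(I))=0$. Since $(\varphi_j^*,s_j^*)$ depends smoothly on $I$ and $\partial_\tau^2 F$ is continuous, a standard compactness argument over the closed interval $[I_j^-,I_j^+]$ gives neighborhoods $U_j(I)$ that can be chosen so that $\tau_j^*$ is jointly smooth in $(I,\varphi,s)$. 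Finally, by continuity of $\partial_\tau^2 F$, shrinking $U_j(I)$ if necessary, the critical point $\tau_j^*(I,\varphi,s)$ remains a non-degenerate local minimum of the function in \eqref{eq:tau-Melnikov}.

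I do not expect a genuine obstacle here; the statement is essentially a parametric Morse-type lemma. The only mildly delicate point is the uniformity of the neighborhoods $U_j(I)$ as $I$ varies over the closed interval $[I_j^-,I_j^+]$, which is handled by compactness together with the fact that $(\varphi_j^*,s_j^*)$ is already given as a smooth map of $I$.
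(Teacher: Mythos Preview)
Your proposal is correct and follows essentially the same approach as the paper: compute $\partial_\tau F$ and $\partial_\tau^2 F$, use that $(\varphi_j^*(I),s_j^*(I))$ is a non-degenerate minimum of $\cLh(I,\cdot,\cdot)$ to see that $\partial_\tau F=0$ and $\partial_\tau^2 F>0$ at $\tau=0$, and then apply the implicit function theorem. Your added remark about compactness over $[I_j^-,I_j^+]$ to get uniformity of the neighborhoods is a reasonable elaboration that the paper leaves implicit.
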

\begin{proof}
		We need to solve the equation
\begin{equation}  \label{eq:crit-Melnikov}
\partial_\tau \left(
		\cLh (I, \varphi - \tau\omega(I), s - \tau)
\right)	 =- \left(\omega(I) \cdot \partial_\varphi \cLh 
		+ \partial_s \cLh 
\right)|_{(I, \varphi - \tau\omega(I), s-\tau)} = 0.
\end{equation}
Since $(\varphi^*_j(I), s^*_j(I))$ is a critical point of $\cLh (I, \cdot, \cdot)$, $\tau = 0$ solves \eqref{eq:crit-Melnikov} at $(\varphi^*_j(I), s^*_j(I))$.

Since $\partial^2_{(\varphi, s)}\cLh (I, \varphi_j^*(I), s_j^*(I))$ is positive definite, and
\[
  \begin{aligned}
		&	\partial_\tau
			\left(
					-\left(\omega(I) \cdot \partial_\varphi \cLh 
							+ \partial_s \cLh
					\right)|_{(I, \varphi - \tau\omega(I), s-\tau)}
			\right) \\
		& =
		-\partial_\tau \left(
				\partial_{(\varphi, s)} \cLh  \bmat{\omega(I) \\ 1} |_{(I, \varphi - \tau\omega(I), s-\tau)}
		\right) \\
		& = \bmat{\omega(I) & 1} \partial^2_{(\varphi, s)} \cLh (I, \varphi - \tau\omega(I), s - \tau) \bmat{ \omega(I) \\ 1},
  \end{aligned}
\]
we get
\begin{equation}  \label{eq:strict-min}
  \partial^2_\tau  \left(
		\cLh (I, \varphi^*_j(I) - \tau\omega(I), s^*_j(I) - \tau)
\right)	> 0.
\end{equation}
By the implicit function theorem, there exists a unique $\tau_j^*(I, \varphi, s)$, depending smoothly on $(I, \varphi, s)$ in a neighborhood of the graph $(I, \varphi^*_j(I), s^*_j(I))$ over the variable $I$, solving equation \eqref{eq:crit-Melnikov}. Moreover, $\tau^*_j$ are non-degenerate local minima for the mapping \eqref{eq:tau-Melnikov} due to \eqref{eq:strict-min}. 
\end{proof}

In view of this Lemma, on each interval $(I_j^-,I_j^+)$ and for $\theta=\varphi-s\,\omega(I)$ with $(\varphi,s)\in U_j(I)$, we can now introduce $\bar{\tau}^*_j(I,\theta)$ and $\cL^*_j(I,\theta)$ by the same formulas~(\ref{eq:tau_theta},\ref{eq:useful_reduced_poincare_function}) defining $\bar{\tau}^*(I,\theta)$ and $\cL^*(I,\theta)$, simply changing $\tau^*$ to $\tau^*_j$. We will also denote
\[
  \theta_j^*(I) = \varphi_j^*(I) -  s_j^*\omega(I).
\]

\begin{lemma}\label{lem:critical-curve}
		For each $I \in [I_j^-, I_j^+]$, $\theta_j^*(I)$ is a non-degenerate minimum of $\cL_j^*(I, \cdot)$.	
\end{lemma}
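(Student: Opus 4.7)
The plan is to identify the reduced Poincaré function $\cL_j^*(I,\theta)$ as a partial minimization of a function obtained from $\cL_h(I,\varphi,s)$ by an invertible linear change of coordinates, and then invoke the standard envelope / Schur-complement fact that partial minimization of a non-degenerate local minimum is again a non-degenerate local minimum in the remaining variable.

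Concretely, set
\[
F(I,\theta,\tau) := \cL_h\bigl(I,\,\theta - \tau\omega(I),\,-\tau\bigr).
\]
For each fixed $I$, the map $(\theta,\tau)\mapsto(\varphi,s)=(\theta-\tau\omega(I),-\tau)$ is a linear isomorphism of $\R^2$, so critical points of $\cL_h(I,\cdot,\cdot)$ and of $F(I,\cdot,\cdot)$ are in bijective correspondence with matching non-degeneracy (the Hessians are related by a congruence $D^2 F = J^\top D^2\cL_h\, J$ with $J$ invertible). Consequently, the non-degenerate local minimum $(\varphi_j^*(I),s_j^*(I))$ of $\cL_h(I,\cdot,\cdot)$ corresponds to the non-degenerate local minimum $(\theta_j^*(I),-s_j^*(I))$ of $F(I,\cdot,\cdot)$.

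Next, the local analogue of~\eqref{eq:useful_reduced_poincare_function} reads
\[
\cL_j^*(I,\theta) = F\bigl(I,\theta,\bar\tau_j^*(I,\theta)\bigr),
\]
and by its very definition $\bar\tau_j^*(I,\theta)=\tau_j^*(I,\theta,0)$ is the unique critical point of the scalar function $\tau\mapsto F(I,\theta,\tau)$ near $\tau=-s_j^*(I)$. The shift identity $\tau_j^*(I,\varphi-\sigma\omega(I),s-\sigma)=\tau_j^*(I,\varphi,s)-\sigma$, valid locally by the uniqueness part of the implicit function theorem (repeating the argument the paper uses for the global $\tau^*$), evaluated at $(\varphi,s,\sigma)=(\varphi_j^*(I),s_j^*(I),s_j^*(I))$ gives $\bar\tau_j^*(I,\theta_j^*(I))=-s_j^*(I)$, confirming that $\cL_j^*(I,\cdot)$ picks out exactly the branch of $F$ through $(\theta_j^*(I),-s_j^*(I))$.

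Finally, apply the envelope / Schur-complement computation. Setting $g(\theta):=F(I,\theta,\bar\tau_j^*(I,\theta))$ and using $\partial_\tau F\bigl|_{\tau=\bar\tau_j^*}=0$ to cancel cross-terms on differentiation yields
\[
g'(\theta)=\partial_\theta F\big|_{\tau=\bar\tau_j^*(I,\theta)}, \qquad
g''(\theta)=\bigl(\partial^2_{\theta\theta}F - (\partial^2_{\theta\tau}F)^2/\partial^2_{\tau\tau}F\bigr)\Big|_{\tau=\bar\tau_j^*(I,\theta)}.
\]
At $\theta=\theta_j^*(I)$ the first expression vanishes because $(\theta_j^*,-s_j^*)$ is a critical point of $F$, while the second is the Schur complement of the $\tau$-block of $D^2F$ at a point where $D^2F$ is positive definite, hence strictly positive. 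Thus $\theta_j^*(I)$ is a non-degenerate local minimum of $\cL_j^*(I,\cdot)$, as claimed. No real obstacle is anticipated; the only subtlety is to track that the branch $\bar\tau_j^*$ selected by the implicit function theorem is the one realizing a minimum of $F$ in $\tau$ (not some spurious critical point), which is automatic since $\partial^2_{\tau\tau}F(\theta_j^*,-s_j^*)>0$ by the non-degeneracy established in the preceding lemma.
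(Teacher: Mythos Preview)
Your argument is correct and reaches the same conclusion as the paper, but the packaging is different enough to be worth noting. The paper differentiates \eqref{eq:useful_reduced_poincare_function} directly in $\theta$ and uses the ``crest'' identity $(\omega(I)\partial_\varphi+\partial_s)\cL_h=0$ along the graph of $\bar\tau_j^*$ to kill the terms carrying $\partial_\theta\bar\tau_j^*$, obtaining $\partial_\theta\cL_j^*=\partial_\varphi\cL_h\big|_{(I,\theta-\bar\tau_j^*\omega,-\bar\tau_j^*)}$; it then asserts that ``repeating the same calculation'' gives $\partial^2_{\theta\theta}\cL_j^*=\partial^2_{\varphi\varphi}\cL_h$ at the critical point. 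Your route---pass to the linear coordinates $(\theta,\tau)$, identify $\cL_j^*$ as the partial minimization of $F$ in $\tau$, and invoke the Schur complement---is the same underlying computation stated invariantly. It has the advantage that the second-derivative step is transparently rigorous: the Schur formula $g''=F_{\theta\theta}-F_{\theta\tau}^{2}/F_{\tau\tau}$ is automatically positive once $D^2F$ is positive definite, whereas in the paper's direct approach the cross term $\partial_\theta\bar\tau_j^*\cdot\partial_\varphi\bigl((\omega\partial_\varphi+\partial_s)\cL_h\bigr)$ does not obviously vanish (the crest identity holds only along a curve, not on an open set), so one really needs the Schur mechanism to conclude. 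One small remark: your value $\bar\tau_j^*(I,\theta_j^*)=-s_j^*(I)$ is the correct one from \eqref{eq:tau_theta}; the paper writes $0$ at that spot, but in either case the evaluation point works out to $(I,\varphi_j^*,s_j^*)$, so the argument is unaffected.
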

\begin{proof}
If $\tau^*=\tau^*(I,\varphi,s)$ is a critical point of~\eqref{eq:tau-Melnikov}, it satisfies
\[
\left(\omega(I) \partial_\varphi \cLh+\partial_s \cLh\right)\left(I,\varphi -\tau^* \omega(I),s-\tau^*\right)=0,
\]
so that we also have
\begin{equation}  \label{eq:crest}
\begin{aligned}
&\left(\omega(I) \partial_\varphi \cLh+\partial_s \cLh\right)\Bigr|_{\left(I,\theta -\bar{\tau}^*(I,\theta)\, \omega(I),-\bar{\tau}^*(I,\theta)\right)}\\
=&\left(\omega(I) \partial_\varphi \cLh+\partial_s \cLh\right)\Bigr|_{\left(I,\theta -\tau^*(I,\theta,0)\, \omega(I),-\tau^*(I,\theta,0)\right)}=0.\
\end{aligned}
\end{equation}

		Denote by $\id$ the identity matrix, then differentiating~\eqref{eq:useful_reduced_poincare_function} we get
		\[
\begin{aligned}
		  \partial_\theta \cL^*_j(I, \theta)
			& = \partial_\varphi \cLh  \left(\id - \partial_\theta \bar{\tau}^*_j\,\omega(I) 
            - \partial_\theta \bar{\tau}^*_j\partial_s \cLh\right) 
			\Bigr|_{(I, \theta - \bar{\tau}^*_j\,\omega(I), - \bar{\tau}^*_j) }
			\\
			& = \left(\partial_\varphi \cLh - 
           \partial_\theta \bar{\tau}^*_j
           \left(\omega(I) \partial_\varphi \cLh+\partial_s \cLh\right)\right)
            \Bigr|_{\left(I,\theta -\bar{\tau}^*_j \omega(I),-\bar{\tau}^*_j\right)}\\            
			& = \partial_\varphi \cLh  \Bigr|_{(I, \theta - \bar{\tau}^*_j\,\omega(I), - \bar{\tau}^*_j) }
\end{aligned}
		\]
		by \eqref{eq:crest}. Since $\bar{\tau}^*_j(I, \theta^*_j) = \tau_j^*(I, \varphi^*_j(I), s_j^*(I)) = 0$, we get that $\theta^*_j(I)$ is a critical point of $\cL_j^*(I, \cdot)$. Moreover, repeating the same calculation, we have
		\[
				\partial^2_{\theta \theta} \cL_j^*
				= \partial^2_{\varphi \varphi}
 \cLh  \Bigr|_{(I, \theta - \bar{\tau}^*_j\,\omega(I), - \bar{\tau}^*_j) }.
		\]
		Since $\partial^2_{\varphi \varphi} \cLh $ is positive definite at $(\varphi^*_j, s^*_j)$, $\theta_j^*$ is a non-degenerate local minimum.
\end{proof}

\begin{lemma}[Ascending ladder] \label{lem:ladder}
		There exists $\delta_0 > 0$ such that for every $\delta \in (0, \delta_0)$, there exists a sequence of curves
		\[
				\gamma_{i, j} \subset [I^-, I^+] \times \T  \quad 1 \le i \le m_j, \, 1 \le j \le k, \, l \in \{1, 2\}
		\]
		such that the following holds.
\begin{enumerate}
		\item For each $i, j$, the curve $\gamma_{i, j}$ is a smooth graph over the variable $I$, i.e.,
				\[
						\gamma_{i, j} = \{(I, f_{i, j}(I)) \st I \in [I_{i, j}^1, I_{i, j}^2]\}.
				\]
				$\gamma_{i, j}$ is a segment of the level curve of $\cL_j^*$ and the Hamiltonian flow of $\cL_j^*$ on $\gamma_{i, j}$ is increasing in the $I$ component.

		\item $I_{1, j}^1 \in [I_j^-, I_j^- + \delta]$, $I_{m_j, j}^2 \in [I_j^+ - \delta, I_j^+]$.

		\item $I_{m_j, j}^2 = I_{1, j+1}^1$ for all $j = 1, \ldots, k-1$.
\end{enumerate}
Let us also denote by $x_{i, j}^1, x_{i, j}^2$ the lower and upper end points of the curves $\gamma_{i, j}$.
\end{lemma}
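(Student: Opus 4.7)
The plan is to realize each $\gamma_{i,j}$ as a segment of a level curve of $\mathcal{L}_j^*$ lying strictly above the critical graph $\theta=\theta_j^*(I)$, in the region where $\partial_\theta\mathcal{L}_j^*>0$. Since the Hamiltonian flow of $\mathcal{L}_j^*$ satisfies $\dot I=\partial_\theta\mathcal{L}_j^*$, it will automatically advance $I$ monotonically along such a level curve while preserving it as a level set.

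Fix $j$. By Lemma~\ref{lem:critical-curve} and compactness of $[I_j^-,I_j^+]$, I would extract uniform constants $\eta_j,\alpha_j>0$ such that $\mathcal{L}_j^*$ is defined and $\partial^2_{\theta\theta}\mathcal{L}_j^*\geq\alpha_j$ on the strip
\[
V_j=\{(I,\theta):I\in[I_j^-,I_j^+],\,|\theta-\theta_j^*(I)|<\eta_j\}.
\]
Writing $c_j(I)=\mathcal{L}_j^*(I,\theta_j^*(I))$, the slice convexity yields the Morse-type bound $\mathcal{L}_j^*(I,\theta_j^*(I)+\eta_j)\geq c_j(I)+\tfrac{\alpha_j}{2}\eta_j^2$ for each $I$. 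I then fix $\beta_j\in(0,\alpha_j\eta_j^2/8)$ and partition $[I_j^-,I_j^+]$ into abutting sub-intervals $J_i=[a_i,b_i]$ fine enough that $\mathrm{osc}(c_j|_{J_i})\leq\beta_j$. Setting $c_{i,j}:=\max_{J_i}c_j+\beta_j$, the inequalities
\[
\max_{J_i}c_j\ <\ c_{i,j}\ <\ \min_{J_i}c_j+\tfrac{\alpha_j}{2}\eta_j^2\ \leq\ \mathcal{L}_j^*(I,\theta_j^*(I)+\eta_j)
\]
hold for every $I\in J_i$. Strict convexity of $\mathcal{L}_j^*(I,\cdot)$ together with the intermediate value theorem then produces a unique $\theta=f_{i,j}(I)\in(\theta_j^*(I),\theta_j^*(I)+\eta_j)$ with $\mathcal{L}_j^*(I,f_{i,j}(I))=c_{i,j}$; smoothness of $f_{i,j}$ follows from the implicit function theorem since $\partial_\theta\mathcal{L}_j^*>0$ on the upper branch. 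The graph $\gamma_{i,j}:=\mathrm{graph}(f_{i,j}|_{J_i})$ is then a segment of a level curve with the required monotonicity of $I$ under the Hamiltonian flow.

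For the endpoint and matching conditions I would first shrink the cover of Def.~\ref{def:G}, if necessary, so that consecutive intervals overlap by at most $\delta$, i.e., $I_j^+-I_{j+1}^-\leq\delta$. This is always possible since one may truncate each $(I_j^-,I_j^+)$ to a sub-interval without losing the smooth non-degenerate family $\theta_j^*(I)$. Then I adjust the outermost partition points so that $a_1^{(j)}\in[I_j^-,I_j^-+\delta]$ and $b_{m_j}^{(j)}\in[I_j^+-\delta,I_j^+]$, declaring $b_{m_j}^{(j)}=a_1^{(j+1)}$ to be a common point in the now non-empty intersection $[I_j^+-\delta,I_j^+]\cap[I_{j+1}^-,I_{j+1}^-+\delta]\subset[I^-,I^+]$.

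The hard part is the balancing $\mathrm{osc}(c_j|_{J_i})\leq\beta_j<\alpha_j\eta_j^2/8$: this chain of inequalities is exactly what prevents the level curve at height $c_{i,j}$ from either pinching off at the critical curve (where $\partial_\theta\mathcal{L}_j^*=0$ and the $I$-monotonicity would fail) or escaping the strip $V_j$ where the Morse structure is available. Once these constants are nested correctly, the rest of the construction is a finite collection of implicit function theorem applications.
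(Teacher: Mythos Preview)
Your proposal is correct and follows essentially the same idea as the paper: pick level curves of $\mathcal{L}_j^*$ on the side of the critical curve $\theta=\theta_j^*(I)$ where $\partial_\theta\mathcal{L}_j^*>0$, so that $\dot I=\partial_\theta\mathcal{L}_j^*$ forces $I$ to increase along them, and then patch finitely many such segments to cover $[I_j^-,I_j^+]$.

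The difference is only in implementation. The paper argues loosely: for each $I$ there is some $\phi(I)>\theta_j^*(I)$ with $\partial_\theta\mathcal{L}_j^*(I,\phi(I))>0$, hence a level-curve segment through $(I,\phi(I))$ with increasing $I$, and ``by compactness'' finitely many such segments suffice; the endpoint conditions (2)--(3) are asserted without detail. You instead make the compactness quantitative by extracting uniform Morse constants $\alpha_j,\eta_j$ on a strip, partitioning $[I_j^-,I_j^+]$ according to the oscillation of the critical value $c_j(I)=\mathcal{L}_j^*(I,\theta_j^*(I))$, and choosing a definite level $c_{i,j}$ on each sub-interval so that the implicit function theorem produces a smooth graph $f_{i,j}$ strictly inside the strip. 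Your nested inequality $\mathrm{osc}(c_j|_{J_i})\le\beta_j<\alpha_j\eta_j^2/8$ is exactly what makes the paper's compactness step rigorous, and your explicit shrinking of the overlaps $I_j^+-I_{j+1}^-\le\delta$ supplies the matching condition (3) that the paper leaves implicit. Both arguments buy the same conclusion; yours is simply a filled-in version of theirs.
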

\begin{proof}
		Since $\theta_j^*(I)$ is a non-degenerate local minimum of $\partial_\theta \cL^*_j(I, \cdot)$, for every $I \in [I_j^-, I_j^+]$, there exists $\phi(I) > \theta^*_j(I)$, such that
        \[
		  \partial_\theta \cL_j^*(I, \phi(I)) > 0.
		\]
		This means there is a segment $\gamma_j(I)$ of the level curve of $\cL^*_j$, on which $I$ is increasing under the Hamiltonian flow of $\cL^*_j$. By compactness, there exists a finite collection $\gamma_{i, j}$ of such level curves, $1 \le i \le m_j$ such that $\bigcup_i \pi_I (\gamma_{i, j} )\supset [I_j^-, I_j^+]$. One can adjust the collection so that $\pi_I (\gamma_{i, j}) \cap  \pi_I (\gamma_{i, j+1}) \ne \emptyset$ and $I_j^- \in \pi_I (\gamma_{1, j})$ and $I_j^+ \in \pi_I(\gamma_{m_j, j})$. It suffices to choose
		\[
\begin{aligned}
				& I_{i, j}^1 \in   \pi_I (\gamma_{i, j}) \cap  \pi_I (\gamma_{i, j-1}),
				\quad j \ge 2, \\
				& I_{i, j}^2 \in  \pi_I (\gamma_{i, j}) \cap  \pi_I (\gamma_{i, j+1}),
				\quad j \le m_j + 1,
\end{aligned}
		\]
		and $I_{1, j}^1$, $I_{2, j}^2$ satisfying (2) and (3); then truncate the curves $\gamma_{i, j}$ to the $I$ interval $[I_{i, j}^1, I_{i, j}^2]$.
\end{proof}

\begin{figure}[ht]
 \centering
 \includegraphics[height=2.5in]{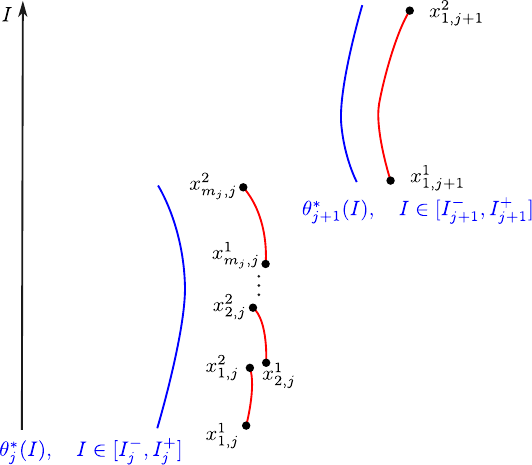}
 \caption{The ascending ladder}
\end{figure}

\subsection{Constructing the transition chain}

\begin{proposition}[Transition chain] \label{prop:transition}
		Let $h \in \cG(I^-, I^+)$, and let $\gamma_{i, j}$ be the ascending ladder constructed in Lemma~\ref{lem:ladder}. Then there is $\delta_1 > 0$ such that for every $\delta \in (0, \delta_1)$,  there exists $\epsilon_0 > 0$ depending on $h$ and $\delta$, such that for every $\epsilon \in (0, \epsilon_0)$, there exist
		\[
				y_{i, j}^l \in [I^-, I^+] \times \T, \quad   \quad 1 \le i \le m_j, \, 1 \le j \le k, \, l \in \{1, 2\},
		\]
		and $M_{i, j}, N_{i, j} \in \N$ such that the following hold.

		Let $S_{j,\epsilon}$ denote the scattering map associated to $\cL_j^*$, and $T_\epsilon$ denote the inner dynamics.
\begin{enumerate}
		\item $S_{j, \epsilon}^{M_{i, j}}(y_{i, j}^1) = y_{i, j}^2$.
		\item $T_\epsilon^{N_{i, j}}(y_{i, j}^2) = y_{i+1, j}^1$, $i = 1, \dots, m_j - 1$.
		\item $T_\epsilon^{N_{m_j, j}}(x_{m_j, j}^2) = x_{1, j+1}^1$, $j = 1, \dots, k-1$.
		\item $|x_{i, j}^l - y_{i, j}^l| < \delta$ for all $i = 1, \dots,  m_j$, $j = 1, \dots, k$, $l = 1, 2$.
\end{enumerate}
\end{proposition}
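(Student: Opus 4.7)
The plan is to construct the points $y_{i,j}^l$ inductively in lexicographic order along the ascending ladder, alternating between scattering-map iterations that traverse each arc $\gamma_{i,j}$ and inner-map iterations that bridge consecutive arcs. Two quantitative facts from the preceding discussion drive the construction: $\cS_{j,\epsilon}$ is the time-$(-\epsilon)$ map of the Hamiltonian flow of $\cL^*_j$ modulo $\cO(\epsilon^2)$, and $T_\epsilon$ equals the integrable twist $(I,\varphi)\mapsto(I,\varphi+\omega(I))$ modulo $\cO(\epsilon)$. I initialize $y_{1,1}^1 := x_{1,1}^1$ and then produce each successive point by applying either $\cS_{j,\epsilon}^{M_{i,j}}$ or $T_\epsilon^{N_{i,j}}$, with the iteration counts chosen according to the rules below.

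For the scattering step at the $(i,j)$-arc, I would adopt a \emph{stopping rule}: let $M_{i,j}$ be the smallest $M$ for which the $I$-component of $\cS_{j,\epsilon}^M(y_{i,j}^1)$ reaches $I_{i,j}^2$, and set $y_{i,j}^2 := \cS_{j,\epsilon}^{M_{i,j}}(y_{i,j}^1)$. Because the Hamiltonian flow of $\cL^*_j$ is strictly increasing in $I$ along a neighborhood of $\gamma_{i,j}$, such $M_{i,j}$ exists and satisfies $M_{i,j}\epsilon = T_{i,j} + \cO(\epsilon + |y_{i,j}^1 - x_{i,j}^1|)$, where $T_{i,j}$ is the Hamiltonian flow time from $x_{i,j}^1$ to $x_{i,j}^2$. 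The stopping rule forces $|I(y_{i,j}^2) - I_{i,j}^2| = \cO(\epsilon)$, since one step of $\cS_{j,\epsilon}$ moves $I$ by $\cO(\epsilon)$. A standard Gronwall-type comparison with the Hamiltonian flow, which is Lipschitz on the compact arc with constant independent of $\epsilon$, yields $|\theta(y_{i,j}^2) - \theta(x_{i,j}^2)| \le C|y_{i,j}^1 - x_{i,j}^1| + \cO(\epsilon)$ with $C$ depending only on $h$.

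For the inner step, I would sharpen Lemma~\ref{lem:ladder} so that the overlap $I$-values satisfy $I_{i,j}^2 = I_{i+1,j}^1$ and $I_{m_j,j}^2 = I_{1,j+1}^1$, and are chosen so that each $\omega(I_{i,j}^2)$ is Diophantine with uniform constants; this is possible because Diophantine numbers are dense and we impose only finitely many conditions. Let $K := \sum_j 2m_j$. For any $\delta > 0$ there exists $N(\delta)$ independent of $\epsilon$ such that $\{N\omega(I_{i,j}^2) \bmod 1 : 1\le N\le N(\delta)\}$ is $\delta/(4K)$-dense in $\T$. Pick $N_{i,j} \le N(\delta)$ so that $\varphi(x_{i,j}^2) + N_{i,j}\omega(I_{i,j}^2)$ lies within $\delta/(4K)$ of $\varphi(x_{i+1,j}^1)$ modulo $1$, and set $y_{i+1,j}^1 := T_\epsilon^{N_{i,j}}(y_{i,j}^2)$. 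Using $T_\epsilon^N - T_0^N = \cO(N^2 \epsilon)$ together with the Jacobian $DT_0^N = \bmat{1 & 0 \\ N\omega'(I) & 1}$, the $\theta$-error after one inner step is bounded by
\[
  |\theta(y_{i,j}^2)-\theta(x_{i,j}^2)| + N(\delta)\,|\omega'|\cdot|I(y_{i,j}^2)-I_{i,j}^2| + \tfrac{\delta}{4K} + \cO(N(\delta)^2\epsilon),
\]
and the $I$-error by $\cO(\epsilon)$. Crucially, the blow-up factor $N(\delta)|\omega'|$ multiplies only the $\cO(\epsilon)$ $I$-error granted by the stopping rule, not the $\cO(\delta)$ $\theta$-error. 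Summing over the $K$ induction steps, the cumulative $\theta$-error stays below $\delta/4 + \cO(K N(\delta)^2 \epsilon)$, which is $<\delta$ once $\epsilon_0$ is chosen small enough depending on $\delta$, $K$ and $N(\delta)$.

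The main technical obstacle is exactly this Jacobian blow-up: if the $I$-error at $y_{i,j}^2$ were only $\cO(\delta)$ (as would occur with a naive fixed choice of $M_{i,j} \approx T_{i,j}/\epsilon$), the subsequent amplification in the inner step would produce a $\theta$-error of order $N(\delta)\delta \gg \delta$, destroying the induction. The stopping-rule definition of $M_{i,j}$ is what converts this $I$-error from $\cO(\delta)$ down to $\cO(\epsilon)$; the uniform Diophantine choice of the $I_{i,j}^2$ keeps $N(\delta)$ independent of $\epsilon$ so that $N(\delta)^2\epsilon$ can be made arbitrarily small; and the finiteness of $K$ prevents the linearly accumulating contributions from exceeding the $\delta$-budget.
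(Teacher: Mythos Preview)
Your approach works in outline but differs substantially from the paper's, and carries one bookkeeping slip. The paper never tracks pointwise errors. Instead it argues with open balls: a flow-box argument for the Hamiltonian flow of $\cL_j^*$ shows $S_{j,\epsilon}^{M}(B_\delta(x_1))\supset B_{\delta/C}(x_2)$ whenever $x_1,x_2$ lie on the same arc $\gamma_{i,j}$; and Lemma~\ref{lem:twist} handles the inner transitions using only the twist $\omega'\ne 0$ --- the image under $T_0^N$ of a ball contains a piece of a graph over $I$ which, after enough shear, becomes a graph over $\theta$ wrapping once around and hence covers a small ball on the next arc. Chaining these covering relations forward (radii shrink by a bounded factor at each of the finitely many steps) yields a nonempty final ball, and pulling back a point from there produces the $y_{i,j}^l$. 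No Diophantine hypothesis on $\omega(I_{i,j}^2)$ and no Gronwall bookkeeping are needed.

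Your error analysis is not quite closed: the scattering step multiplies the $\theta$-error by a Gronwall factor $C>1$, so the recursion is $e_{n+1}\le C\,e_n+\tfrac{\delta}{4K}$ (plus the $\epsilon$-dependent remainder you identify), not purely additive. With $e_0=0$ this gives $e_K\le\tfrac{\delta}{4K}\cdot\tfrac{C^K-1}{C-1}$, which in general exceeds $\delta$. The repair is immediate --- replace $4K$ by any constant dominating $(C^K-1)/(C-1)$, which depends only on $h$; $N(\delta)$ grows accordingly but remains independent of $\epsilon$, and the rest of your construction (the stopping rule forcing an $\cO(\epsilon)$ $I$-error before each inner step, the uniform bound $N_{i,j}\le N(\delta)$ from Diophantine equidistribution) then goes through. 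The paper's covering scheme absorbs this same geometric growth into the shrinking ball radii, which is why it reads more cleanly.
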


We first prove Proposition \ref{prop:diffusion} assuming Proposition \ref{prop:transition}.
\begin{proof}[Proof of Proposition \ref{prop:diffusion}]
Let $I(\cdot)$ denote the projection $(I, \theta) \mapsto I$. By Lemma \ref{lem:ladder} and Proposition \ref{prop:transition}, there exists a pseudo-orbit $\{y_{i, j}^l\}$ of the maps $S_{j, \epsilon}$ and $T_\epsilon$, such that  $I(y_{1, 1}^1) < I^-$ and $I(y_{m_k, k}^2) > I^+$. By Proposition \ref{prop:shadowing}, there exists an orbit $(q, p, I, \varphi)(t)$ of the original Hamiltonian system  and $T > 0$ such that
\[
I(0) < I^-, \quad I(T) > I^+.
\]
\end{proof}
The remaining section is dedicated to proving Proposition \ref{prop:transition}.

We have the following simple lemma about integrable twist maps.

\begin{lemma}
\label{lem:twist}
		Consider the map $T: [I^-, I^+] \times \T \to \R \times \T$ defined by
\[
  T(I, \theta) = (I, \theta + \omega(I))
\]
where $\omega'(I) > 0$ for all $I \in [I^-, I^+]$. Let $T_\epsilon: [I^-, I^+] \times \T \to \R \times \T$ be a family of maps such that
\[
		\|T_\epsilon - T\|_{C_1} \le C \epsilon
\]
for an independent constant $C$.

Then for any two $C^1$ graphs $\gamma_1$, $\gamma_2$ over the variable $I$
\[
	\gamma_1	= \{(I, f(I)) \st I \in (I_1, I_2)\}, \quad
	\gamma_2 = 	\{(I, g(I)) \st I \in (I_1, I_2)\},
\]
$x_1 \in \gamma_1$ and $\delta > 0$, there exists $\epsilon_0 > 0$, $N \in \N$, $x_2 \in \gamma_2$, $\delta' > 0$ depending on $x_1$, $\delta$ depending only on $T$ and $C$, and  $I_1, I_2$, such that for all $\epsilon \in (0, \epsilon_0)$,
\[
		T(B_\delta(x_1)) \supset B_{\delta'}(x_2).
\]
\end{lemma}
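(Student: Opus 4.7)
The idea is to exploit the twist $\omega'(I_0)>0$ at $I_0$, the $I$-coordinate of $x_1$, to stretch $B_\delta(x_1)$ enough under $N$ iterates of the unperturbed map $T$ so that its image meets $\gamma_2$ transversely at an interior point, and then to transfer this picture to $T_\epsilon^N$ by topological persistence.

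First I would work only with $T$. Writing $x_1=(I_0,\theta_0)$, consider the horizontal segment $\ell:=\{(I_0+s,\theta_0)\st|s|<\delta/4\}\subset B_{\delta/2}(x_1)$. Its iterate $T^N(\ell)=\{(I_0+s,\theta_0+N\omega(I_0+s))\}$ is a graph over $I$ whose slope $N\omega'(I)$ grows linearly in $N$, while $\gamma_2$, a graph $\theta=g(I)$, has slope bounded independently of $N$. The function
\[
\psi(s):=\theta_0+N\omega(I_0+s)-g(I_0+s)\pmod{1}
\]
has derivative of magnitude $\approx N\omega'(I_0)$ for $|s|<\delta/4$, so for $N$ large enough its range on $|s|<\delta/8$ has length greater than $1$, and the intermediate value theorem produces some $s^*\in(-\delta/8,\delta/8)$ with $\psi(s^*)=0$. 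Set $x_2:=(I_0+s^*,g(I_0+s^*))\in\gamma_2$; by construction $x_2$ lies on the curve $T^N(\ell)\subset T^N(B_{\delta/2}(x_1))$, and the intersection of $T^N(\ell)$ with $\gamma_2$ at $x_2$ is transverse because their slopes differ by $\approx N\omega'(I_0)$.

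Next I would quantify and perturb. Since $T^N(I,\theta)=(I,\theta+N\omega(I))$ is a diffeomorphism, $U:=T^N(B_{\delta/2}(x_1))$ is an open topological disk bounded by the Jordan curve $T^N(\partial B_{\delta/2}(x_1))$; the transversality together with $|s^*|<\delta/8$ gives a positive distance $2\delta'$ from $x_2$ to this boundary, with $\delta'$ controlled by the shear $N\omega'(I_0)$ and the slope $g'(I_0+s^*)$. For fixed $N$ the iterate $T_\epsilon^N$ converges to $T^N$ in $C^0$ on $\bar B_{\delta/2}(x_1)$ at rate $O(N\epsilon)$, and I would take $\epsilon_0>0$ small enough that this error stays below $\delta'$ for every $\epsilon\in(0,\epsilon_0)$. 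Then for every $y\in B_{\delta'}(x_2)$ the straight-line homotopy $F_t:=(1-t)T^N+tT_\epsilon^N$ satisfies $|F_t(z)-y|>\delta'$ for all $z\in\partial B_{\delta/2}(x_1)$ and $t\in[0,1]$, so homotopy invariance of the topological degree gives
\[
\deg\bigl(T_\epsilon^N,B_{\delta/2}(x_1),y\bigr)=\deg\bigl(T^N,B_{\delta/2}(x_1),y\bigr)=\pm 1\neq 0,
\]
whence $y\in T_\epsilon^N(B_{\delta/2}(x_1))\subset T_\epsilon^N(B_\delta(x_1))$. (I read the conclusion of the lemma as $T_\epsilon^N(B_\delta(x_1))\supset B_{\delta'}(x_2)$, since otherwise $N$ and $\epsilon$ play no role.)

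The main obstacle is the estimate of $\delta'$: the image $U$ is a sheared disk whose short axis has order $\delta/N$, so $\delta'$ itself is only $O(\delta/N)$ and is not uniform in $N$. This is compatible with the dependence of $\delta'$ on $(x_1,\delta,T,C,I_1,I_2)$ allowed by the statement, but it is the reason one cannot hope to make $\delta'$ independent of $N$ in the conclusion.
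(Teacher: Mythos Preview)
Your argument is correct and shares the paper's core strategy: use the twist to shear a short curve until it wraps around $\T$ and meets $\gamma_2$, then pass to $T_\epsilon^N$ by $C^0$-stability. The differences are minor and, if anything, in your favor. The paper iterates the whole curve $\gamma_1$, obtains an intersection $x_2\in T^N(\gamma_1)\cap\gamma_2$, and then asserts $T^N(B_\delta(x_1))\supset B_{\delta_1}(x_2)$ without checking that $x_2$ falls in the $I$-window $(I_0-\delta,I_0+\delta)$; by iterating instead a horizontal segment of length $<\delta/2$ through $x_1$ and choosing $N$ large enough (depending on $\delta$) that this short piece already wraps, you make $x_2\in T^N(B_{\delta/2}(x_1))$ automatic. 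For the perturbation step the paper simply says ``choose $\epsilon_0$ small enough'' and appeals to $C^0$-closeness of $T_\epsilon^N$ to $T^N$; your topological-degree argument is a more explicit route to the same conclusion and costs nothing extra. Your reading of the intended conclusion as $T_\epsilon^N(B_\delta(x_1))\supset B_{\delta'}(x_2)$ is correct --- the displayed $T$ in the statement is a typo.
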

\begin{proof}
	Note that	
		\[
		  T^N(\gamma_1) =
	 \left\{
			(I, f(I) + N \omega(I)) \st I \in (I_1, I_2)
	\right\}
		\]
		has the tangent vector $(1, N \omega'(I) + f'(I))$. Let $\delta = \min \omega'(I) > 0$, we choose $N$ large enough so that
		\[
		  N \omega'(I) - \sup |f'(I)| > \frac12 N\delta > 0.
		\]
		In this case $T^N(\gamma_1)$ can be rewritten as a graph over the variable $\theta$
		\[
				T^N(\gamma_1) =  \{(h(\theta), \theta) \st \theta \in (\theta_1, \theta_2)\},
		\]
		where $|h'(\theta)| < \sup 1/(N\omega'(I) + f'(I)) < \frac{2}{N\delta}$, and
		$|\theta_2 - \theta_1| > \frac12 N\delta$. If $\frac12 N\delta > 1$, then $T^N(\gamma_1)$ must intersect $\gamma_2$ at $x_2$.

		It follows that there exists $\delta_1 > 0$ depending only on $T$ and $N$ such that
		\[
				T^N(B_{\delta}(x_1)) \supset B_{\delta_1}(x_2).
		\]
		Moreover, we can choose $\epsilon_0$ small enough such that for all $\|T_\epsilon - T\|_{C^1} < C\epsilon_0$, we have
		\[
				T_\epsilon^N(B_\delta(x_1))   \supset B_{\delta_1/2}(x_2).
		\]
\end{proof}

\begin{proof}[Proof of Proposition~\ref{prop:transition}]
		Let $\gamma_{i,j}$, $1 \le i \le m_j$, $1 \le j \le k$, $l = 1, 2$ be the ascending ladder constructed in Lemma~\ref{lem:ladder}.

		\textbf{Claim}. There exists $\delta_0 > 0$ such that for any $\delta \in (0, \delta_0)$, there exists $\epsilon_0 > 0$ and $C > 1$ depending only on the perturbation $h$ and $\delta$, such that for all $1 \le i \le m_j$, $1 \le j \le k$, any $x_1, x_2 \in \gamma_{i, j}$ with $\pi_I(x_1) < \pi_I(x_2)$, and $\epsilon < \epsilon_0$ there exists $M \in \N$ depending on $x_1, x_2, \delta, \epsilon$ such that
		\[
				S_{\epsilon, j}^{M}(B_\delta(x_1)) \supset B_{\delta/C}(x_2),
		\]
		where $B_r(x)$ denote the ball of radius $r$ at $x$.

		\textbf{Proof of Claim}.	Let $\phi^t_j$ denote the Hamiltonian flow defined by the Hamiltonian $\cL_j^*$. By construction, there exists $t > 0$ such that
		\[
				\phi^t_j(x_1)  = x_2
		\]
		There exists $c > 0$ (depending only on $\cL_j$, which depends only on $h$) and a local flow-box coordinate
		\[
				\chi_{i, j}:  (-c, t + c) \times (-c, c) \to \T \times \R
		\]
		such that $x_1 = \chi_{i, j}(0, 0)$ and
		\[
				\phi^t \circ \chi_{i, j}(s, m) = \chi_{i, j}(s + t, m).
		\]
		In particular, $x_2 = \chi_{i, j}(0, t)$. Let $M = \floor{t/\epsilon}$. Since $S_{\epsilon, j} = \phi_{i, j}^\epsilon + O(\epsilon^2)$, $S_{\epsilon, j}^M = \phi_{i, j}^{\epsilon t} + O(\epsilon)$,
\[
				\chi_{i, j}^{-1} \circ S_{\epsilon, j}^{M} \circ \chi_{i, j}(s, m)
				= (s + t, m) + O(\epsilon).
\]
For any $r \in (0, c)$ and $\epsilon_0$ small enough, we have
\[
    \chi_{i, j}^{-1} \circ S_{\epsilon, j}^{M} \circ \chi_{i, j}(B_r(s, m))
		\supset B_{r/2}(s + t, m).
\]
Let $C_1 = \max\left\{ \sup \|D\chi_{i, j}\|, \sup\|D\chi^{-1}\|\right\}$ which depends only on the perturbation $h$. Then if $C_1 \delta \in (0, c)$,
\[
\begin{aligned}
	  S_{\epsilon, j}^{M} (B_\delta(x_1) )
	& \supset
	\chi_{i, j} \circ \chi_{i, j}^{-1} \circ S_{\epsilon, j}^{M} \circ \chi_{i, j}(B_{\delta/C_1}(s, m)) \\
	& \supset \chi_{i, j}(B_{\delta/(2C_1)}(s + t, m))
	\supset B_{\delta/(2C_1^2)}(x_2).
\end{aligned}
\]
This conclude the proof of \textbf{Claim}.

We now construct a sequence of balls
\[
		B_{\delta_{i, j}^l}(z_{i, j}^l) \subset B_\delta(x_{i, j}^l),
\]
and $M_{i, j}$, $N_{i, j} \in \N$ such that
\begin{equation}  \label{eq:S-image}
		S_{\epsilon, j}^{M_{i, j}}(B_{\delta_{i, j}^1}(z_{i, j}^1))
		\supset B_{\delta_{i, j}^2}(z_{i, j}^2), \quad
		1 \le i \le m_j, \quad 1 \le j \le k,
\end{equation}
and
\[
		T_\epsilon^{N_{i, j}}(B_{\delta_{i, j}^2}(z_{i, j}^2))  \supset B_{\delta_{i^*, j^*}^1}(z_{i^*, j^*}^1),
\]
where $(i^*, j^*) = (i+1, j)$ if $i < m_j$ and $(i^*, j^*) = (1, j+1)$ if $i = m_j$ and $j < k$. This implies existence of $y_{i, j}^l \in B_{\delta_{i, j}^l}(z_{i, j}^l)$ satisfying the conclusion of our Proposition.

We choose $\delta_{1, 1}^1 = \delta$ and $z_{1, 1}^1 = x_{1, 1}^1$. Constructing by induction, we assume $\delta_{i, j}^1$ and $z_{i, j}^1$ are already chosen. Choose $z_{i, j}^2 = x_{i, j}^2$ and apply \textbf{Claim} to get existence of $\epsilon_0 > 0$ and $M_{i, j}^\epsilon \in \N$ and $\delta_{i, j}^2$ such that \eqref{eq:S-image} holds for all $\epsilon \in (0, \epsilon_0)$.

The curves $\gamma_{i, j}$ and $\gamma_{i^*, j^*}$ are given by graphs over the variable $I$
\[
		\gamma_{i, j} = \{(I, f(I)) \st I \in [I_{i, j}^1, I_{i, j}^2]\}  , \quad
		\gamma_{i^*, j^*} = \{(I, g(I)) \st I \in [I_{i^*, j^*}^1, I_{i^*, j^*}^2]\},
\]
where $I_{i, j}^2 = I_{i^*, j^*}^1$. Extending these curves along the level curves of $\cL^*_j$ and $\cL_{j^*}^*$, we may assume that the extended curves overlap in $I$ direction for some interval
\[
		I \in [I_{i, j}^2, I_{i, j}^2 + c].
\]
Applying Lemma~\ref{lem:twist}, by possibly reducing $\epsilon_0$, there exists $N \in \N$, $z_{i^*, j^*}^1 \in \gamma_{i^*, j^*}$, and $\delta_{i^*, j^*}^1 > 0$ such that for all $\epsilon \in (0, \epsilon_0)$,
\[
		T^N(B_{\delta_{i, j}^2}(z_{i, j}^2)) \supset B_{\delta_{i^*, j^*}^1}(z_{i^*, j^*}^1).
\]
We can continue the construction until all the balls are defined. Note that we need to reduce $\epsilon_0$ up to finitely many times, but it will be well defined at the end of the construction.
\end{proof}

\section{Genericity}

We prove Proposition~\ref{prop:generic} in this section. The proof follows a standard parametric transversality argument. In Section~\ref{sec:transversality}, we prove that the map $g \mapsto \cL_{h + g}$ is satisfies a transversality condition. This transversality allows us to generically avoid certain degeneracies of derivatives.

\subsection{Transversality of parameters} \label{sec:transversality}

Denote $x = (I, \varphi, s)$, and $X = \R \times \T \times \T$.
The spaces of functions $\cP_\sigma$ and $\cA_\sigma$ are introduced in Def.~\ref{def:spaces}. 

\begin{proposition}\label{prop:melnikov-transvserse}
At every  $x \in \R \times \T \times \T$, the mapping
\begin{equation}  \label{eq:L-transverse}
		E(g) := g \mapsto
		\bmat{\partial_{(s, \varphi)} \\ \partial^2_{(s, \varphi)} \\ \partial^3_{(s,\varphi)}} \cL_g(x),
		\quad
		 \cP_\sigma \to \R^{2 + 3 + 4}
\end{equation}
is a surjection.
\end{proposition}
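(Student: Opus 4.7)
The plan is to prove surjectivity of $E$ by exhibiting nine elements of $\cP_\sigma$ whose images $E(g) \in \R^9$ span. Since $g \in \cP_\sigma$ depends only on $(q, \varphi, t)$, the Melnikov integral becomes
\[
\cL_g(I, \varphi, s) = -\int_\R \bigl[g(q_0(t), \varphi + t\omega(I), s + t) - g(0, \varphi + t\omega(I), s+t)\bigr] dt.
\]
I would restrict to the subfamily $g_{k, l}(q, \varphi, t) = u(q_{j_0})\, e^{2\pi i(k\varphi + lt)}$, where $u(x) = \sin(2\pi x)$ and $j_0 \in \{1, \ldots, d\}$ is chosen so that the component $q_{0, j_0}(t)$ is non-constant (possible since the homoclinic $q_0$ is non-trivial, so at least one component must be non-constant). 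Because $u(0) = 0$, the second term of the integrand vanishes and the integral factors as
\[
\cL_{g_{k, l}}(I, \varphi, s) = \hat w\bigl(-(k\omega(I) + l)\bigr) \, e^{2\pi i(k \varphi + l s)},
\]
where $w(t) := -u(q_{0, j_0}(t))$ is real analytic with exponential decay at $\pm\infty$ and $\hat w(\xi) := \int_\R w(t)\, e^{-2\pi i \xi t}\, dt$.

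Differentiating in $(\varphi, s)$ at $x_0 = (I_0, \varphi_0, s_0)$, the image $E(g_{k, l}) \in \C^9$ equals the scalar $\hat w(-(k\omega(I_0) + l))\, e^{2\pi i(k\varphi_0 + l s_0)}$ times the ``monomial vector''
\[
m(k, l) \, := \, \bigl((2\pi i)^{a+b}\, k^a\, l^b\bigr)_{a + b \in \{1, 2, 3\}} \, \in \, \C^9.
\]
Surjectivity then reduces to two claims. \emph{Claim (a):} there are infinitely many $(k, l) \in \Z^2$ with $\hat w(-(k\omega(I_0) + l)) \ne 0$. Since $w$ is nonzero (as $u \circ q_{0, j_0}$ is nonconstant) and exponentially decaying, a Paley--Wiener-type argument shows $\hat w$ extends to a bounded holomorphic function on some horizontal strip around $\R$ and is not identically zero; hence its real zeros form a discrete set, so the bad set $Z = \{(k,l) : \hat w(-(k\omega_0 + l)) = 0\}$ is contained in a discrete union of parallel lines in $\R^2$, thin in $\Z^2$. \emph{Claim (b):} among $(k, l) \in \Z^2 \setminus Z$ one can select nine points whose monomial vectors $m(k_i, l_i)$ are linearly independent in $\C^9$. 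Linear dependence would produce a nonzero polynomial $P(k, l) = \sum c_\alpha k^{\alpha_1} l^{\alpha_2}$ of total degree $\le 3$ with no constant term vanishing at the nine points; the zero set of such a $P$ is an algebraic curve of degree $\le 3$ in $\R^2$, and iteratively avoiding $Z$ together with the finitely many curves imposed by previous choices produces the desired configuration. Finally, since $g_{k, l}$ and $g_{-k, -l}$ are complex conjugate, replacing the exponentials by their real and imaginary parts produces genuine elements of $\cP_\sigma$ whose images under $E$ have the same $\R$-linear span as the complex vectors, yielding the nine spanning elements.

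The most delicate step is Claim (a) in the degenerate case $\omega(I_0) \in \Q$, where $\{k\omega_0 + l : (k, l) \in \Z^2\}$ takes values in the discrete lattice $\tfrac{1}{q}\Z$; one must rule out the possibility that $\hat w$ vanishes on this entire lattice. The profile $u(x) = \sin(2\pi x)$ generically suffices, but to be robust I would enlarge the family to $g_{n, k, l} = (e^{2\pi i n \cdot q} - 1)\, e^{2\pi i(k\varphi + l t)}$ with $n \in \Z^d$: the simultaneous vanishing of all Fourier coefficients $\int [e^{2\pi i n \cdot q_0(t)} - 1]\, e^{2\pi i \xi_0 t}\, dt$ at a fixed $\xi_0$ would force the corresponding push-forward distribution on $\T^d$ to be zero, contradicting the nontriviality of the homoclinic $q_0$. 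With this fallback in hand Claim (a) holds in all cases, Claim (b) is routine, and surjectivity of $E$ follows.
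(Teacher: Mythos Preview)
Your approach is genuinely different from the paper's and largely sound, but the rational-frequency case is not closed.

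\textbf{Comparison with the paper.} The paper does not use Fourier test functions at all. Instead it builds, for each target jet $(c_{j,k})$, a \emph{smooth} product $g_0(q,\varphi,t)=R_0(q)\,G_0(\varphi,s)$ where $G_0$ has the prescribed $(\varphi,s)$–jet at $(\varphi_0,s_0)$ and $R_0$ is a bump near $q_0(0)$ chosen so that $\int R_0(q_0(t))\,f(t)\,dt\approx f(0)$; then $\partial_\varphi^j\partial_s^k\cL_{g_0}(x_0)\approx c_{j,k}$. Finally $g_0$ is approximated in $C^{r+1}$ by some $g\in\cP_\sigma$, using that $\|\cL_{g}-\cL_{g_0}\|_{C^r}\le M_r\|g-g_0\|_{C^{r+1}}$. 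This sidesteps the rational/irrational dichotomy entirely: no Paley--Wiener, no counting of lattice points, no push-forward distributions. Your Fourier route is more explicit and in the irrational case is clean, but it buys you extra case analysis that the paper simply avoids.

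\textbf{The gap.} Your fallback claim---``simultaneous vanishing of $\int[e^{2\pi i n\cdot q_0(t)}-1]e^{2\pi i\xi_0 t}\,dt$ over all $n$ forces the push-forward to be zero, contradicting nontriviality of $q_0$''---is the crux, and it is not obvious. What you must show is that for \emph{every} $\xi_0$ the functional $f\mapsto\int f(q_0(t))\,e^{2\pi i\xi_0 t}\,dt$ (on $\{f:f(0)=0\}$) is nonzero. This is true, but it requires an argument about the geometry of the homoclinic curve: one needs a point $q_*=q_0(t_*)\ne0$ together with a small neighborhood $U\ni q_*$ such that $q_0^{-1}(U)$ is controlled (e.g.\ a single interval, or finitely many intervals with distinct tangents, using that the \emph{phase} orbit $(p_0,q_0)$ is injective so $\dot q_0(t_j)=p_0(t_j)$ are pairwise distinct at the preimages). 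Only then can a localized $f$ produce a $G(t)=f(q_0(t))$ with $\hat G(\xi_0)\ne0$. In time-symmetric situations (say $q_0(t)=q_0(c-t)$) every point has two preimages and one must also check $e^{2\pi i\xi_0 t_*}+e^{2\pi i\xi_0(c-t_*)}\ne0$, which holds for generic $t_*$. None of this is hard, but ``contradicting the nontriviality of $q_0$'' does not cover it. Note also that with a \emph{single} profile $u=\sin(2\pi\cdot)$ you genuinely cannot exclude $\hat w$ vanishing on all of $\tfrac1q\Z$ (by Poisson summation this is the single identity $\sum_m w(t+mq)\equiv0$), so the fallback family is necessary, and it must be argued properly.

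If you fill in that localization argument, your proof goes through; but the paper's bump-plus-approximation method reaches the conclusion with considerably less work.
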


Before the proof, we state some Lemmas.
\begin{lemma}\label{lem:smooth}
		The mapping
\[
		(x, h) \mapsto \cLh (x), \quad
		X \times \cA_\sigma \to \R
\]
is $C^\infty$ in both $x$ and $h$. Moreover, for every $l > r \ge 0$, the mapping $h \mapsto \cLh $ is well defined as a mapping $C^l \mapsto C^l$, and there exists a constant $M_r > 0$ such that
\[
		\|\cL_{h_1} - \cL_{h_2}\|_{C^r} \le M_r \|h_1 - h_2\|_{C^{r+1}}.
\]
\end{lemma}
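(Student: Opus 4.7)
The plan is to exploit two structural facts about the Melnikov potential: the map $h \mapsto \cL_h$ is \emph{linear} in $h$ (evident from formula~\eqref{eq:melnikov}), so smoothness in $h$ reduces to boundedness of linear operators; and the homoclinic orbit converges to the saddle $(0,0)$ exponentially. Because $V$ has a non-degenerate maximum at $q=0$, the point $(0,0)$ is a hyperbolic equilibrium of $\tfrac12 p^2 + V(q)$, whence $(p_0(t),q_0(t))$ lies on the stable/unstable manifold of the saddle and there exist constants $C_0,\alpha>0$ with
\[
|p_0(t)| + |q_0(t)| \le C_0\, e^{-\alpha|t|}, \quad t \in \R.
\]

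The first step would be to establish the $C^0$ bound by applying the mean value theorem in the first two arguments of $h$:
\[
|h(p_0(t), q_0(t), \varphi+t\omega(I), I, s+t) - h(0,0, \varphi+t\omega(I), I, s+t)|
\le \|h\|_{C^1}(|p_0(t)|+|q_0(t)|),
\]
which is bounded by $C_0\|h\|_{C^1} e^{-\alpha|t|}$ and hence integrable. This gives $\|\cL_h\|_{C^0} \le M_0\|h\|_{C^1}$.

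The second step is to differentiate under the integral sign with respect to $x=(I,\varphi,s)$. Derivatives in $\varphi$ and $s$ preserve the ``difference of $h$ at $(p_0,q_0,\cdot)$ versus $(0,0,\cdot)$'' structure, so the MVT bound applies verbatim. Derivatives in $I$ introduce additional factors of $t\,\omega'(I)$ via the chain rule (through the argument $\varphi + t\omega(I)$), as well as terms coming from differentiating in the direct $I$-slot; both types still give integrand differences of order $|p_0(t)|+|q_0(t)|$ multiplied by polynomials in $t$. Inductively, the $r$-th $x$-derivative of the integrand is dominated by $C_r\|h\|_{C^{r+1}}\,t^r e^{-\alpha|t|}$, an $L^1$ envelope which both justifies differentiation under the integral (via dominated convergence) and yields $\|\cL_h\|_{C^r} \le M_r\|h\|_{C^{r+1}}$. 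The Lipschitz estimate is immediate from linearity: $\cL_{h_1}-\cL_{h_2}=\cL_{h_1-h_2}$.

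Finally, joint $C^\infty$ smoothness in $(x,h)$ follows from combining the $x$-smoothness just obtained with the fact that a bounded linear map is $C^\infty$ in its argument (its Fréchet derivative equals itself and all higher differentials vanish). The main (and only) subtlety is the careful bookkeeping of the chain rule for $I$-derivatives to track the powers of $t$ that appear; once it is observed that $t^r e^{-\alpha|t|} \in L^1(\R)$ for all $r$, the rest of the argument is routine.
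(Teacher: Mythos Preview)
Your proof is correct and follows the same strategy as the paper: exponential decay of the homoclinic together with the mean-value estimate in the $(p,q)$ variables gives an integrable envelope for the integrand and all its $x$-derivatives, and linearity in $h$ then delivers both the $C^r$ bound and the smoothness in $h$. You are in fact more careful than the paper on one point: the paper asserts $\partial^\alpha_x \cL_h = \cL_{\partial^\alpha_x h}$, which is literally correct only for $(\varphi,s)$-derivatives, since $I$-derivatives produce the extra $t\,\omega'(I)$ chain-rule terms you identified; your bookkeeping of the resulting polynomial-in-$t$ factors (and the observation that $t^r e^{-\alpha|t|}\in L^1$) is exactly what is needed to make the estimate $\|\cL_h\|_{C^r}\le M_r\|h\|_{C^{r+1}}$ rigorous.
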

\begin{proof}
		Recall \eqref{eq:melnikov}:
\begin{equation}  \label{eq:Lh-def}
				\begin{aligned}
						\cLh (I, \varphi, s)  = - \int_{-\infty}^\infty
						& \bigl[ h(p_0(t), q_0(t), \varphi + \omega(I)t, I, s + t)  \\
						&  \quad - h(0, 0, \varphi + \omega(I)t, I, s + t) \bigr ] dt.
				\end{aligned}
\end{equation}
		By assumption $(p_0(t), q_0(t))$ converges exponentially fast to $(0, 0)$ as $t \to \pm \infty$, the integral \eqref{eq:Lh-def} converges absolutely and uniformly over all $x \in X$, with the estimate
		\begin{equation}  \label{eq:Lh-bound}
				\sup_{x \in X} |\cLh (x)| 
				\le C_r \sup_{x \in X} |\partial_{(q, p)} h| 
		\end{equation}
		for some $C_r > 0$.

		Therefore for any multi-index $\alpha \in \N^3$, 
		\[
				\partial^\alpha_x  \cLh 
				= \cL_{\partial^\alpha_x h}.
		\]
		Since $h$ is analytic, $\cLh $ is $C^\infty$ in $x$. $\cLh $ is $C^\infty$ in $h$ since it is linear in $h$. 

		Moreover, using \eqref{eq:Lh-bound}, we have
		\[
				\|\cLh \|_{C^r} = \sup_{|\alpha| \le r} \sup_x|\partial^\alpha_x \cLh |
				\le \sup_{|\alpha| \le r} \sup_x |\cL_{\partial^\alpha_x h}|
				\le M_r \|h\|_{C^{r+1}}
		\]
		for some $M_r > 0$ depending on $C_r$.
\end{proof}

Given $r \ge 1$, let
\[
		\Delta_r = \{j, k \in \Z \st j, k \ge 0, \, j + k \le r\}.
\]

\begin{lemma}\label{lem:control-derivatives}
		Given $r \ge 1$, $(I_0, s_0, \varphi_0) \in X$, and
		\[
				\{c_{j, k} \in \R \st (j, k) \in \Delta_r\},
		\]
		for every $\delta > 0$, there exists $g \in \cP_\sigma$ such that
		\[
				|\partial_\phi^j \partial_s^k \cL_g(I_0, s_0, \varphi_0) - c_{j, k}| < \delta, \quad \forall j, k \in \Delta_r.
		\]
\end{lemma}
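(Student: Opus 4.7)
The plan is to recast the lemma as a surjectivity statement. The linear map
\[
E: \cP_\sigma \to \R^{|\Delta_r|}, \qquad g \longmapsto \bigl(\partial_\varphi^j \partial_s^k \cL_g(I_0, \varphi_0, s_0)\bigr)_{(j,k) \in \Delta_r}
\]
is continuous by Lemma~\ref{lem:smooth}, and its image is a subspace of the finite-dimensional space $\R^{|\Delta_r|}$, hence automatically closed. Therefore the $\delta$-approximation conclusion is equivalent to $E$ being surjective, and this is what I will verify by exhibiting enough elements of the image to span.

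The key construction uses single Fourier modes in $(\varphi, t)$. For each $(m, n) \in \Z^2$ and each real-analytic $h: \T^d \to \R$, set
\[
g^\C_{m,n,h}(q, \varphi, t) := h(q)\, e^{2\pi i (m\varphi + nt)},
\]
which is the complex combination of the two real test functions $h(q)\cos(2\pi(m\varphi+nt))$ and $h(q)\sin(2\pi(m\varphi+nt))$, both in $\cP_\sigma$. A direct computation from \eqref{eq:melnikov}, using exponential decay of $q_0(t)$ to justify absolute convergence, yields
\[
\cL_{g^\C_{m,n,h}}(I, \varphi, s) = -e^{2\pi i(m\varphi + ns)}\, C_{m,n}(I; h), \qquad C_{m,n}(I; h) := \int_\R \bigl[h(q_0(t)) - h(0)\bigr]\, e^{2\pi i(m\omega(I) + n)t}\, dt,
\]
and therefore
\[
\partial_\varphi^j \partial_s^k \cL_{g^\C_{m,n,h}}(I_0, \varphi_0, s_0) = -(2\pi i)^{j+k}\, e^{2\pi i(m\varphi_0 + ns_0)}\, C_{m,n}(I_0; h)\, m^j n^k.
\]
Surjectivity of $E$, recovered by taking real and imaginary parts of $g^\C_{m,n,h}$, then follows from two claims: \emph{(a)} for every $(m, n) \in \Z^2$ one can choose analytic $h$ with $C_{m,n}(I_0; h) \ne 0$; and \emph{(b)} the complex vectors $v_{m,n} := (m^j n^k)_{(j,k) \in \Delta_r}$ span $\C^{|\Delta_r|}$ as $(m, n)$ ranges over $\Z^2$. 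Claim \emph{(b)} is a standard polynomial interpolation argument: any nontrivial linear relation among the coordinates would correspond to a polynomial of total degree $\le r$ in two variables vanishing on the entire integer lattice, hence identically.

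The hard part will be claim \emph{(a)}. Setting $\xi := m\omega(I_0) + n$, I view $h \mapsto L_\xi(h) := C_{m,n}(I_0; h)$ as a linear functional on $C^1(\T^d)$; for such $h$ the integrand is $L^1_t$ thanks to $|h(q_0(t)) - h(0)| = O(|q_0(t)|)$ together with exponential decay of $q_0(t) \to 0$. To show $L_\xi \not\equiv 0$, I will pick a point $q_* \in q_0(\R) \setminus \{0\}$ at which $q_0$ is locally injective with $q_0'(t_*) \ne 0$ (such points are abundant since $q_0$ is a real-analytic curve with $q_0' = p_0 \ne 0$ at every finite time), and take $h$ a $C^1$ bump supported in a small neighborhood of $q_*$. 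Then $h(0) = 0$ and, by the change of variables $q = q_0(t)$ localized near $t_*$, one has $L_\xi(h) \approx |q_0'(t_*)|^{-1}\, e^{2\pi i \xi t_*} \int_{\T^d} h$, which is nonzero. A standard density argument (e.g.\ replace the bump by a sufficiently high truncation of its Fourier series) produces a real-analytic $h \in \cA_\sigma$ on which $L_\xi$ still does not vanish, completing the proof.
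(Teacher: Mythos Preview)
Your argument is correct in spirit but takes a genuinely different route from the paper's. The paper constructs a single test function $g_0(q,\varphi,s)=R_0(q)G_0(\varphi,s)$, where $G_0$ carries the prescribed jet at $(\varphi_0,s_0)$ and $R_0$ is a bump on $\T^d$ acting as an approximate delta along the homoclinic, so that $\cL_{g_0}(I_0,\cdot,\cdot)\approx G_0$; it then invokes Lemma~\ref{lem:smooth} to pass to an analytic approximation. You instead recast the problem as surjectivity of the linear map $E$, exploit that single Fourier modes in $(\varphi,t)$ make the Melnikov integral factor as a phase times the scalar $C_{m,n}(I_0;h)$, and reduce to the Vandermonde-type statement that $(m^jn^k)_{(j,k)\in\Delta_r}$ span. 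Both proofs ultimately hinge on the same bump-in-$q$ localization along $q_0(\cdot)$; your Fourier decomposition is more structural and makes the linear-algebraic content explicit, while the paper's construction is more direct and avoids the span argument.

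Two small repairs are needed in your claim \emph{(a)}. First, for $d\ge 2$ the change of variables $q=q_0(t)$ is only one-dimensional, so the asserted formula $L_\xi(h)\approx |q_0'(t_*)|^{-1}e^{2\pi i\xi t_*}\int_{\T^d}h$ is not right; what you actually get is $L_\xi(h)\approx e^{2\pi i\xi t_*}\int h(q_0(t))\,dt$, which is nonzero for a nonnegative bump with small support since the phase is nearly constant. Second, you should argue that $q_0$ visits the chosen neighborhood of $q_*$ only once: in $d=1$ energy conservation gives $|p_0(t)|^2=2(V(0)-V(q_0(t)))>0$, so $q_0$ is monotone on $\T$; in $d\ge 2$ the real-analytic curve $q_0(\R)$ has at most isolated self-intersections, so a generic $q_*$ works. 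With these fixes your proof goes through.
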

\begin{proof}
Let $G_0(\varphi, s)$ be a smooth function that satisfies
\[
		\partial_\varphi^j \varphi_s^k G(\varphi_0, s_0) = c_{j, k}, \quad \forall j, k \in \Delta_r.
\]

In a neighborhood $U$ of $q_0(0)$ we consider a coordinate change $\phi: U \to \R^d$ such that $\varphi(q_0(t)) = te_1$ for all $t$ such that $q_0(t) \in U$, where $e_1$ is the first coordinate vector. Let $a > 0$ be small enough such that $[-a, a]^d \subset \phi(U)$, and given $\delta_1 > 0$ let $\eta_1$ be a smooth function supported on $(-a/2, a/2)$ such that
\[
  \left|
	\int \eta_1(t) f(t) dt - f(0)
	\right| < \delta_1 \|f\|_{C^r}
\]
for any test function $f$. Let $\eta$ be a smooth function such that $\eta(x_1, \dots, x_d) = \eta_1(x_1)$ on $[-a/2, a/2]^d$ and supported on $[-a, a]^d$, then define
\[
		R_0(q) = \eta \circ \varphi(q).
\]
Let $f(t)$ be a smooth test function, we have
\[
		\left |\int_{-\infty}^\infty R_0(q_0(t)) f(t) dt - f(0) \right|
		= \left |\int_{-\infty}^\infty \eta_1(t e_1) f(t) dt - f(0) \right|
		\le \delta_1 \|f\|_{C^r}.
\]

Define $g_0(q, \varphi, t) = R_0(q) G_0(\varphi, s)$, we get
\[
\begin{aligned}
		& \quad |\partial_\varphi^j \partial_s^k \cL_g(I_0, \varphi_0, s_0)	- c_{j, k}| \\
		& =
		\left|
		\int_{-\infty}^\infty (  R(q_0(t)) - R(0) ) \partial_\varphi^j \partial_s^k G(\varphi_0 + \omega(I) t, s_0 + t) dt - c_{j, k}
\right| \\
		& =
		\left|
		\int_{-\infty}^\infty R(q_0(t))\partial_\varphi^j \partial_s^k G(\varphi_0 + \omega(I)t, s_0 + t) dt - c_{j, k}
\right|  \le \delta_1 C,
\end{aligned}
\]
where $C$ is a constant that depends only on $G$. We now choose $\delta_1$ so that $\delta_1 C < \delta/2$.

Since $\cP_\sigma$ is dense in $C^{r+1}(\T^3)$, by Lemma~\ref{lem:smooth}, we can choose $g \in \cP_\sigma$ such that
\[
		\|\cL_g - \cL_{g_0}\|_{C^r} < \delta/2.
\]
This choice of $g$ verifies the claim of our lemma.
\end{proof}

\begin{proof}[Proof of Proposition~\ref{prop:melnikov-transvserse}]
		We can represent the codomain of \eqref{eq:L-transverse} by
		\[
				\{c_{j, k} \st j,k \in \Delta_r \setminus \{(0, 0)\}\}.
		\]
		Let $\bfc_1, \dots, \bfc_{10}$ be a basis be the above space, then by Lemma~\ref{lem:control-derivatives}, for any $\delta > 0$, there exists $g_1, \dots, g_{10} \in \cP_\sigma$ such that
		\[
		  \|E(g_i) - \bfc_i\|_\infty < \delta
		\]
		for all $1 \le i \le 10$. It suffices to choose $\delta$ small enough so that $E(g_i)$ still form a basis.
\end{proof}

\subsection{Generic property of critical points}

We proceed to prove Proposition~\ref{prop:diffusion}.

\begin{lemma}\label{lem:non-deg-min}
		There exists an open and dense subset $\cG(I^-, I^+) \subset \cP_\sigma$, such that for all $I \in [I^-, I^+]$ and $g \in \cG(I^-, I^+)$, the function
		\[
				(s, \varphi) \mapsto \cL_{h + g}(I, s, \varphi), \quad
				\T^2 \to \R
		\]
		does not admit any degenerate local minima or maxima.
\end{lemma}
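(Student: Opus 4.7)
My plan is a parametric transversality argument. Write $F^g_I(\varphi,s):=\cL_{h+g}(I,\varphi,s)$. The key geometric observation is that a degenerate local minimum or maximum of $F^g_I$ at some $(\varphi_0,s_0)$ imposes several independent conditions on the $3$-jet: (a) $\nabla F^g_I = 0$, contributing codimension $2$; (b) $\det \mathrm{Hess}\,F^g_I = 0$ with the Hessian semi-definite, contributing codimension $1$ from the determinant equation; and (c) if $\mathrm{Hess}\,F^g_I$ has rank $1$ with null direction $v$, then necessarily $\partial_v^3 F^g_I = 0$, contributing another codimension $1$, because otherwise the restriction of $F^g_I$ to the line spanned by $v$ has a cubic inflection, incompatible with any local extremum. (The rank-$0$ case is strictly higher codimension.) Let $B \subset J^3 \cong \R^2 \oplus \mathrm{Sym}^2(\R^2) \oplus \mathrm{Sym}^3(\R^2) \cong \R^9$ denote the closed semi-algebraic set of $3$-jets satisfying (a), (b), (c); then $B$ is a finite union of closed strata, each of codimension $\ge 4$.

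Next I would introduce the evaluation map
\[
\Psi(g,I,\varphi,s) := j^3_{(\varphi,s)} F^g_I, \qquad \cP_\sigma \times [I^-,I^+] \times \T \times \T \longrightarrow J^3,
\]
and invoke Proposition~\ref{prop:melnikov-transvserse} together with the linearity of $\cL$ in its perturbation to conclude that $D_g \Psi$ is surjective onto $J^3 \cong \R^9$ at every point. Hence $\Psi$ is a submersion and is automatically transverse to $B$. Thom's parametric transversality theorem then produces a residual (hence dense) set of $g \in \cP_\sigma$ for which $\Psi(g,\cdot)$ is transverse to every stratum of $B$; call this set $\cG(I^-,I^+)$. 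Because $\dim([I^-,I^+]\times\T\times\T) = 3 < 4 \le \mathrm{codim}\,B$, transversality in fact forces the image of $\Psi(g,\cdot)$ to be disjoint from $B$, and by the characterization in the first paragraph this rules out any degenerate local extremum.

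Openness of $\cG(I^-,I^+)$ will follow because ``the image of $\Psi(g,\cdot)$ misses the closed set $B$'' is an open condition on $g$, using compactness of the parameter domain and continuity of $\Psi$. The main technical subtlety I anticipate is the stratified (rather than smooth) nature of $B$: the rank of the Hessian drops across strata, and semi-definiteness is a closed but non-manifold condition. However, Thom's theorem applies stratum-by-stratum and each stratum has codimension $\ge 4$, so once the stratification is fixed the argument is routine. A secondary subtle point is that Thom's theorem in a Banach setting only yields residual, not open, sets a priori; openness here relies essentially on compactness of $[I^-,I^+]\times\T\times\T$ together with the fact that we are avoiding a closed set rather than intersecting a submanifold transversally.
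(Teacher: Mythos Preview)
Your argument is correct and rests on the same parametric--transversality idea as the paper, but the implementations differ in a way worth noting. You work in the reduced jet space $J^3\cong\R^9$, encode the degenerate-extremum constraints as a closed semi-algebraic stratified set $B$ of codimension $\ge 4$, and then invoke stratified Thom transversality together with a Banach-parametric Sard argument. The paper instead \emph{resolves} the stratification by adjoining the kernel direction $v\in\bS^1$ as an additional coordinate: it studies the smooth map
\[
F(x,v,g)=\bigl(\partial_{(s,\varphi)}\cL_{h+g}(x),\ \partial^2_{(s,\varphi)}\cL_{h+g}(x)\,v,\ \partial^3_{(s,\varphi)}\cL_{h+g}(x)[v,v,v]\bigr)\in\R^5
\]
on the $4$-dimensional domain $[I^-,I^+]\times\T^2\times\bS^1$, so that the bad set becomes the zero locus of a genuine submersion rather than a stratified variety. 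It then localizes to a finite-dimensional parameter subspace $E\subset\cP_\sigma$ (chosen via Proposition~\ref{prop:melnikov-transvserse}) and applies ordinary finite-dimensional Sard, sidestepping both the stratified machinery and infinite-dimensional Sard--Smale issues you flag. Your route is more conceptual and fits the standard jet-transversality framework; the paper's is more elementary and self-contained. Both yield the same residual set, and openness follows from compactness of the base in either formulation.
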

\begin{proof}
		Let $\bS^1 = \{v \in \R^2 \st \|v\| = 1\}$, consider the mapping
\[
  F(x, v, g) =
	 \begin{bmatrix}
			\partial_\varphi \cL_{h + g}(x) \\
	\partial_s \cL_{h + g}(x) \\
	\partial_{(s, \varphi)}^2 \cL_{h + g}(x)  v \\
	\partial^3_{(s, \varphi)} \cL_{h + g}(x) [v, v, v]
	\end{bmatrix}, \quad
	X \times \bS^1 \times \cP_\sigma \to \R^5.
\]
Since $F$ is linear in $g$,
\[
  \partial_g F(x_0, v_0, g_0)(\delta g) = F(x_0, v_0, \delta g).
\]

By Proposition~\ref{prop:melnikov-transvserse}, at every $(x_0, v_0, g_0) \in \R \times \bS^1 \times \cP_\sigma$, there exists a finite dimensional subspace $E \subset \cP_\sigma$, such that the mapping
\[
		\delta g \mapsto \partial_g F(x_0, v_0, g_0)(\delta g) = F(x_0, v_0, \delta g), \quad
		E \to \R^5
\]
is a surjection. Moreover, by the smoothness of the mapping $F$ in $(x_0, v_0, g_0)$, the same is true for all $(x, v, g)$ contained in a neighborhood $U_{\mathrm{loc}}$ of $(x_0, v_0, g_0) \in X \times \bS^1 \times E$. By the implicit function theorem, the set
\[
		W_{\mathrm{loc}} = \{(x, v, g) \st F(x, v, g) = 0\}
\]
is a submanifold of codimension $5$ in $U_{\mathrm{loc}}$. Let $\pi_g$ be the projection into the $g$ component, then by Sard's Theorem, the set of regular values $g \in \pi_g(U_{\mathrm{loc}})$, denoted $\cG_{\mathrm{loc}}$, is a residual set in $U_{\mathrm{loc}}$. Since $d\pi_g$ has at most co-rank $4$, $g_1$ is a regular value implies $\pi_g^{-1}(g_1) \cap W_{\mathrm{loc}} = \emptyset$, which means $F(x, v, g_1) \ne 0$ for all $g_1 \in \cG_{\mathrm{loc}}$ and $(x, v, g_1) \in U_{\mathrm{loc}}$.

Let $U_i$ be a countable covering of the separable space $X \times \bS^1 \times \cP_\sigma$ and $\cG_i \subset \cP_\sigma$ be the corresponding local residual sets. Then \[
  \cG = \bigcup_i \cG_i
\]
is a residual subset of $\cP_\sigma$, such that for every $g \in \cG$ and $(x, v) \in X \times \bS^1$,
\[
  F(x, v, g) \ne 0.
\]
Finally, set
\[
		\cG(I^-, I^+) = \{g \st F(x, v, g) \ne 0, \text{ for all } x, v \in [I^-, I^+] \times \T \times \T \times \bS^1\},
\]
this set is open due to compactness in $(x, v)$. It is also dense since $\cG(I^-, I^+) \supset \cG$.

To prove our lemma, let $g \in \cG(I^-, I^+)$ and let $x_0 = (I_0, s_0, \varphi_0)$ be a degenerate critical point of $\cL_{h + g}$, i.e., $\partial_{(s, \varphi)} \cL_{h + g} = 0$ and there exists $v \in \bS^1$ such that
\[
		\partial^2_{(s, \varphi)} \cL_{h + g}(x_0) v = 0.
\]
Since $F(x, v, g) \ne 0$, we must have
\[
  	\partial^3_s{(s, \varphi)} \cL_{h + g}(x) [v, v, v]  \ne 0.
\]
We conclude that the function $t \mapsto \cL_{h + g}(x_0 + t(0, v))$ is an open mapping near $t = 0$, hence $(s_0, \varphi_0)$ cannot be a local minimum or maximum of $\cL_{h+g}(I_0, \cdot, \cdot)$.
\end{proof}

\begin{proof}[Proof of Proposition~\ref{prop:diffusion}]
		By Lemma~\ref{lem:non-deg-min}, if $g \in \cG(I^-, I^+)$, then for all $I \in [I^-, I^+]$ the function $\cL_{h + g}(I, \cdot, \cdot)$ has no degenerate minima or maxima.

		Fix $g \in \cG(I^-, I^+)$, and let $M_g \subset \T \times \T$ denote the set of all global minima of $\cL_{h + g}(I, \cdot, \cdot)$ for some $I \in [I^-, I^+]$. Since all the minima are non-degenerate, they are isolated on each level $\{I\} \times \T \times \T$. Moreover, any non-degenerate local minima can be extended smoothly to a neighborhood of $I$. We conclude that $M_g$ is contained in a finite family of curves of non-degenerate local minima $\{(I, \varphi_J^*(I), s_J^*(I)) \st I \in J\}$ where $J \in \cJ$ is a finite collection of open intervals, and $\bigcup_{J \in \cJ} J \supset [I^-, I^+]$. We now select the intervals $[I_j^-, I_j^+]$ inductively as such:
\begin{itemize}
		\item Let $(I_1^-, I_1^+)$ be any interval in $\cJ$ what contains $I^-$.
		\item If intervals $(I_j^-, I_j^+)$ are selected for $j \le m$, pick any interval $(a, b)$ in $\cJ$ that contains $I_m^+$. Set $I_{m+1}^- = \max \{a, (I_m^- + I_m^+)/2 \}$ and $I_{m+1}^+ = b$.
\end{itemize}
\end{proof}

\appendix

\section{Shadowing pseudo-orbits using Poincar\'e recurrence}
\label{sec:shadowing}

In this section we describe the proof of Proposition~\ref{prop:shadowing}. This Proposition has minor differences from Theorem 3.7 of \cite{GdM20} by allowing the iterates of inner dynamics. Our assumption on the measure preserving properties of $T$ and $S_j$ is also stronger than that of \cite{GdM20} but nevertheless satisfied for a priori unstable systems. The proof follows the main ideas of \cite{GdM20}. 

\begin{lemma}[Lemma 3.2 of \cite{GdM20}]
\label{lem:shadowing-primitive}
		Suppose $\Lambda_\epsilon$ is an NHIM of the Hamiltonian $H$ and $T$ is the associated inner map. Assume that there exists a finite family $S_j$, $1 \le j \le m$, of scattering maps defined on open subsets of $\Lambda_\epsilon$. Assume that $\Lambda_\epsilon$ is compact.

		Then for every $\delta > 0$, there exists two family of functions $n_i^*: \N^i \to \N$ and $m_i^*: \N^{2i + 1} \times N^{i+1} \to \N$ such that for every pseudo-orbit $y_i$, $0 \le i \ne N-1$, of the form
		\[
				y_{i+1} = T^{m_i} \circ S_{j_i} \circ T^{n_i}
		\]
		such that
		\[
				n_i \ge n_i^*(j_0, \dots, j_i), \quad m_i \ge m^*(n_0, \dots, n_i, m_0, \dots, m_{i-1}, j_0, \dots, j_i),
		\]
		there exists and orbit $z(t)$ of the Hamiltonian flow and $t_0 < \dots < t_N$ such that
		\[
				d(z(t_i), y_i) < \delta.
		\]
\end{lemma}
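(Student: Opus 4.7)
My plan is to combine the heteroclinic realization of the scattering maps with the $\lambda$-lemma for normally hyperbolic invariant manifolds, and to carry out a backward induction on the length of the pseudo-orbit. First I would recall the geometric meaning of each scattering step: if $S_j(x) = y$, there are points $\tilde x$ on the $T$-orbit of $x$ and $\tilde y$ on the $T$-orbit of $y$, together with a transverse homoclinic point $z_j(x) \in W^u(\tilde x) \cap W^s(\tilde y)$ depending continuously on $x$, whose forward Hamiltonian flow converges to the orbit of $\tilde y$ and whose backward flow converges to the orbit of $\tilde x$. Thus the block $T^{m_i} \circ S_{j_i} \circ T^{n_i}$ in the pseudo-orbit encodes a genuine Hamiltonian trajectory: follow $\Lambda_\epsilon$ near the orbit of $y_i$ for time roughly $n_i$, jump along the heteroclinic through $z_{j_i}$, then follow $\Lambda_\epsilon$ near the orbit of $y_{i+1}$ for time roughly $m_i$.

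The core analytic tool I would invoke is the $\lambda$-lemma (inclination lemma) adapted to NHIMs: for any small disk $D$ transverse to $W^s(\Lambda_\epsilon)$, the forward iterates $\phi^t(D)$ accumulate in the $C^1$ topology on $W^u(\Lambda_\epsilon)$, and dually for backward iterates and $W^s(\Lambda_\epsilon)$. Using compactness of $\Lambda_\epsilon$, this yields uniform thresholds with the following content: given any target ball $V$ inside the NHIM and any prescribed size $\eta$, whenever the waiting time $t$ is large enough the pullback $\phi^{-t}(V)$ contains a set of size comparable to $\eta$ around the corresponding heteroclinic point. Starting from a ball of radius $\delta$ around $y_N$, I would use this estimate to build backwards a nested sequence $B_N \supset \cdots \supset B_0$, where $B_i$ is centered near $y_i$ and the Hamiltonian flow carries $B_i$ into $B_{i+1}$ over the appropriate time interval. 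The inner-dynamics segments $T^{n_i}$, $T^{m_i}$ are tracked directly on $\Lambda_\epsilon$ using the closeness of orbits near the NHIM, while each scattering segment is absorbed by the $\lambda$-lemma provided $n_i$ and $m_i$ exceed the thresholds dictated by the current required precision. Any point of $B_0$ then gives a shadowing orbit, with hit-times $t_0 < \cdots < t_N$ produced by the induction.

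The main obstacle is the bookkeeping of the shrinking radii: each pullback across a scattering jump compresses the admissible ball by an amount determined by how long one has chosen to wait, and these losses compound through the induction. This forces the thresholds $n_i^*$ and $m_i^*$ to be allowed to grow with the full history of indices, which is exactly the functional dependence stated in the lemma. A secondary but related issue is ensuring that each intermediate ball stays inside the domain of the next scattering map; this is also controlled by making the waiting times sufficiently large, and it is why $n_i^*$ must depend on $j_0, \dots, j_i$ and not only on the current scattering index. Once the induction closes, the required pseudo-orbit shadowing follows and the proof is complete.
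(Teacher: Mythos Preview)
The paper does not prove this lemma: it is quoted verbatim as Lemma~3.2 of \cite{GdM20} and used as a black box. The introduction explicitly lists Lemma~\ref{lem:shadowing-primitive} among the few results for which only a reference is given, and the appendix invokes it without argument to deduce Proposition~\ref{prop:shadowing}. So there is no in-paper proof to compare your sketch against.

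That said, your outline is the standard mechanism behind such shadowing lemmas and matches what one finds in the cited source: realize each scattering step as a transverse homoclinic excursion, invoke the $\lambda$-lemma (inclination lemma) for NHIMs to control how disks transverse to the stable foliation are carried toward the unstable foliation after long inner iteration, and run an induction along the chain. One point of bookkeeping is off, however. The dependency structure in the stated lemma has $n_i^*$ depending on $(j_0,\dots,j_i)$ and $m_i^*$ on the data up through step $i$, i.e.\ on the \emph{past}. A backward induction starting from a $\delta$-ball around $y_N$, as you propose, naturally produces thresholds that depend on $(j_i,\dots,j_{N-1})$, the \emph{future} indices, because the size of $B_i$ is dictated by how many pullbacks remain. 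To match the lemma as written you should run the construction forward: begin with a ball around $y_0$, push it through $T^{n_0}$, across the homoclinic channel for $S_{j_0}$, and then use the $\lambda$-lemma to show that after a further time $m_0$ the image contains a ball around $y_1$ whose radius is controlled by $(j_0,n_0,m_0)$; iterate. This is a direction-of-induction correction rather than a conceptual gap---either orientation yields an equivalent shadowing statement---but only the forward version reproduces the functional dependence actually asserted.
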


To prove Proposition~\ref{prop:shadowing}, given any pseudo-orbit of $T$ and $S_j$, we can use Poincar\'e recurrence to attach arbitrarily long excursions of iterates of $T$. This allows us to construct a new pseudo-orbit that satisfies the requirement of Lemma~\ref{lem:shadowing-primitive}.

In the rest of the section,any  unspecified ``measure'' stands for Lebesgue measure.

\begin{proof}[Proof of Proposition~\ref{prop:shadowing}]
		First, rewrite the pseudo-orbit in the following form
		\[
				x_{i + 1} = T^{s_i} \circ S_{j_i} (x_i) =: f_i(x_i), \quad 0 \le i \ne N-1
		\]
		where $s_i \ge 0$. Note that each $f_i$ maps positive measure sets to positive measure sets, and zero measure sets to zero measure sets. Since each $f_i$ are continuous, then there exists open balls $B_i$ with $x_i \in B_i \subset B_{\delta/2}(x_i)$ such that
		\[
				f_i(B_i) \subset B_{i+1}.
		\]
		Let $n_i^*, m_i^*$ be the functions given by Lemma~\ref{lem:shadowing-primitive} depending on $\delta/2$ and $j_i$.

		By the Poincar\'e recurrence theorem, there exists $n_0 \ge n_0^*(j_0)$ such that both
		\[
			E_0' = \{x \in B_0 \st T^{k_0}(x) \in B_0 \},
		\]
		and $T^{n_0}(E_0')$ have positive measure. Since
		\[
			f_0 \circ T^{n_0} (E_0'),
		\]
		has positive measure, there exists $l_0 \ge  m_0^*(n_0, j_0) - s_0$ such that
		\[
				F_0' = \{x \in f_0 \circ T^{n_0} (E_0') \st T^{l_0}(x) \in f_0 \circ T^{n_0} (E_0')\}
		\]
		has positive measure and so does $F_0 = T^{l_0}(F_0') \subset f_0 \circ T^{n_0} (E_0') \subset B_1$.  Set $m_0 = l_0 + s_0$ and $g_0 = T^{l_0} \circ f_0 \circ T^{n_0} = T^{m_0} \circ S_{j_0} \circ T^{n_0}$, and
		\[
				E_0 = g_0^{-1}(F_0) \cap E_0'.
		\]
		Then $g_0(E_0) = T^{l_0}(F_0') = F_0$ and both $E_0$ and $F_0$ have positive measure.

		Inductively, suppose $E_i$, $F_i$, $g_{i} = T^{m_{i}} \circ S_{j_{i}} \circ T^{n_{i}}$ are constructed such that
		\[
				n_{i} \ge n_{i}^*(j_0, \dots, j_{i}), \quad
				m_{i} \ge m_{i}^*(n_0, \dots, n_{i}, m_0, \dots, m_{i-1}, j_0, \dots, j_{i}),
		\]
		satisfies
		\[
		  E_i \supset \cdots \supset E_0,
		\]
		\[
				F_i = g_{i} \circ \cdots \circ g_0 ( E_i ) \subset B_{i+1}
		\]
		and $E_i$, $F_i$ have positive measure. See Figure~\ref{fig:shadowing} for a diagram of the sets involved in the construction.

		We set
		\[
				E_{i+1}'  = \{x \in F_i \st T^{n_{i+1}}(x) \in F_i\},
		\]
		where $n_{i+1} \ge n_{i+1}^*(j_0, \dots, j_{i+1})$ is chosen such that both $E_{i+1}'$ and $T^{n_{i+1}}(E_{i+1})$ has positive measure. Let $l_{i+1} \ge m_{i+1}^*(n_0, \dots, n_{i+1}, m_0, \dots, m_i, j_0, \dots, j_i) - s_{i+1}$ such that
		\[
				F_{i+1}' = \{x \in f_{i+1} \circ T^{n_{i+1}}(E_{i+1}') \st T^{n_{i+1}}(x) \in f_{i+1} \circ T^{n_{i+1}}(E_{i+1}')\}
		\]
		$m_{i+1} = l_{i+1} + s_{i+1}$, $F_{i+1} = T^{l_{i+1}}(F_{i+1}') \subset f_{i+1} \circ T^{n_{i+1}}(E_{i+1}') \subset B_{i+2}$. Our construction ensures
		\[
				g_{i+1} \circ \cdots g_0(E_i) = g_{i+1}(F_i) \supset	g_{i+1}(E_{i+1}')  \supset F_{i+1},
		\]
		therefore
		\[
				E_{i+1} := \left( g_{i+1} \circ \cdots g_0(E_i) \right)^{-1} (F_{i+1}) \cap E_i
		\]
		is mapped onto $F_{i+1}$ by $g_{i+1} \circ \cdots g_0(E_i)$.

\begin{figure}[ht]
 \centering
 \includegraphics[height=2.5in]{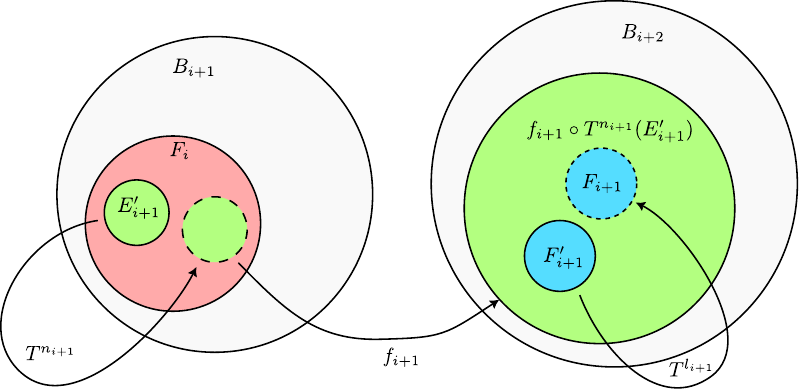}
 \caption{Diagram for the choice of the sets $E_i$, $F_i$.}\label{fig:shadowing}
\end{figure}

		Apply the construction for $0 \le i \le N-1$, we obtain
		\[
				g_i \circ \cdots g_0 (E_{N-1}) \subset g_i \circ \cdots g_0(E_i) = F_i \subset B_{i+1},
		\]
		for all $0 \le i \le N-1$. Pick any $y_0 \in E_N$, then
		\[
				y_{i+1} = g_i (y_i)
		\]
		defines a pseudo-orbit satisfying the condition of Lemma~\ref{lem:shadowing-primitive} and
		\[
		  d(y_i, x_i) < \delta/2.
		\]
		Apply Lemma~\ref{lem:shadowing-primitive}, we get a real orbit shadowing $y_i$ within $\delta/2$ distance, which shadows $x_i$ within $\delta$ distance.
\end{proof}

\bibliographystyle{abbrv}
\bibliography{analytic-a-priori-unstable}

\end{document}